\newtheorem{theorem}{Theorem}
\newtheorem{proposition}{Proposition}
\newtheorem{lemma}{Lemma}
\newtheorem{example}{Example}
\newtheorem{remark}{Remark}
\begin{document}
%
\def\R {{\mathbb{R}}}
\def\N {{\mathbb{N}}}
\def\C {{\mathbb{C}}}
\def\Z {{\mathbb{Z}}}
\def\phi{\varphi}
\def\epsilon{\varepsilon}
\def\ma{{\mathcal A}}
%
\def\tb#1{\|\kern -1.2pt | #1 \|\kern -1.2pt |} 
\def\Qed{\qed\par\medskip\noindent}
%
\title[Generation of singularities from the initial datum]{Generation of singularities from the initial datum for Hamilton-Jacobi equations} 
\author{Paolo Albano} 
\address{Dipartimento di Matematica, 
Universit\`a di Bologna, Piazza
di Porta San Donato 5, 40127 Bologna, Italy} 
\email{paolo.albano@unibo.it}
\author{Piermarco Cannarsa} 
\address{Dipartimento di Matematica, 
Universit\`a di Roma "Tor Vergata", Via della Ricerca Scientifica, 00133 Roma, Italy}
\email{cannarsa@mat.uniroma2.it}
\author{Carlo Sinestrari}
\address{Dipartimento di Ingegneria Civile e Ingegneria Informatica, 
Universit\`a di Roma "Tor Vergata", Via del Politecnico, 00133 Roma, Italy}
\email{sinestra@mat.uniroma2.it}

\date{\today}

\begin{abstract}
 We study the generation of singularities from the initial datum for a solution of the Cauchy problem for a class of Hamilton-Jacobi equations of evolution. For such equations, we give conditions for the existence of singular generalized characteristics starting at the initial time from a given point of the domain, depending on the properties of the proximal subdifferential of the initial datum in a neighbourhood of that point. 
\end{abstract}

\subjclass[2010]{
 35F21, 35D40, 35A21, 35L67}
\keywords{Hamilton-Jacobi equations, viscosity solutions, singularities, generalized characteristics}

\maketitle

\section{Introduction and statement of the results}
\setcounter{equation}{0}
\setcounter{theorem}{0}
\setcounter{proposition}{0}  
\setcounter{lemma}{0}
\setcounter{corollary}{0} 
\setcounter{definition}{0}

Let $\Omega \subset \R^n$ be an open subset, let $T$ be a positive number and let $u:[0,T[\times \Omega \longrightarrow \R$ be a continuous viscosity solution of the Cauchy problem
    \begin{equation}
      \label{eq:cp}
      \begin{cases}
        \partial_t u(t,x)+H(t,x, D_xu(t,x))=0,\quad \text{ in }]0,T[\times \Omega,
    \\
    u(0,x)=u_0(x), \quad \text{ in }\Omega.
        \end{cases} 
      \end{equation}
 We assume the following properties on $H$ and $u_0$. \medskip \\
 {\bf (H)} The Hamiltonian function $H=H(t,x,p)$ is of class $C^2( [0,T]\times \Omega \times \R^n)$ and the hessian w.r.t. the $p$ variable $D_{pp} H(t,x,p)$ is positive definite for all $t,x,p$.  \medskip \\
 {\bf (U$_0$)} The initial value $u_0$ is locally Lipschitz continuous on $\Omega$. \medskip 

We are interested in the local properties of $u$ for small times and away from the boundary of $\Omega$, therefore we do not make any requirement on the behaviour of $u$ on $\partial \Omega$. 


It is well known that solutions of first order Hamilton-Jacobi equations develop singularities: even if the datum $u_0$ is regular, smooth solutions in general exist only for small times. We therefore consider generalized solutions in the viscosity sense, see \cite{CIL,CL}. When the  Hamiltonian is convex in $p$, viscosity solutions are characterized by the property of being semiconcave. In particular, they are Lipschitz continuous and satisfy the equation in the classical sense at points of differentiability. The points $(t,x) \in [0,T[ \times \Omega$ where $u$ is not differentiable are called the {\em singularities} of $u$ and their union is called the singular set.

An interesting property of the singularities of solutions of Hamilton-Jacobi equations is that, under suitable hypotheses, they propagate along generalized characteristics, i.e. Lipschitz curves which solve the equation of characteristics in a generalized sense, see Section 2 for the precise definition. For a solution of \eqref{eq:cp}, we have the following statement, first proved in \cite{AC}: if $(t_0,x_0) \in ]0,T[ \times \Omega$ is singular for $u$, then there exists a generalized characteristic $\gamma:[t_0,t_1] \to \Omega$, such that $\gamma(t_0)=x_0$, and such that $(t,\gamma(t))$  belongs to the singular set of $u$ for $t$ in a right neighbourhood of $t_0$. Further local propagation results, both for evolutionary and for stationary equations, have been obtained in \cite{S1,CY,Y}. In addition, for certain classes of Hamilton-Jacobi equations, there are global propagation results, see \cite{A2,AC2,CC,CMS}, ensuring that generalized characteristics starting from a singular point remain inside the singular set for all subsequent times. Global propagation of singularities has interesting topological applications to both homotopy equivalence and contractibility of the singular set of solutions, see in particular \cite{ACNS,CCF}. 

In this paper we wish to extend this analysis to the case where $t_0=0$ and we consider the following question: \medskip

    {\bf (Q):} for a given $x_0 \in \Omega$, which properties of $u_0$ near $x_0$ ensure that there is (resp. there is not) a singular characteristic starting from $x_0$? 
    
  \medskip

We remark that, if the initial value is $C^2$, then the classical method of characteristics shows that the solution is smooth for a short time, and the above problem becomes trivial. We therefore address question {\bf (Q)} for more general initial data assuming only Lipschitz continuity. We observe that the case considered here has some new features compared with the propagation results mentioned above. In fact, the points $(t_0,x_0)$ with $t_0=0$ are peculiar not only because they lie on the boundary of the domain of $u$, but also because the time $t_0=0$ is the one where we do not have the semiconcavity of the solution, a property which is essential for proving the propagation results mentioned above.

It is interesting to observe that the nondifferentiability of $u_0$ at $x_0$ is neither a necessary nor a sufficient condition for the generation of singularities at $x_0$. To find examples, it suffices to consider the Cauchy problem for the eikonal equation in one dimension
   \begin{equation}\label{eq:1+1}
      \begin{cases}
\partial_t u(t,x)+\frac 12 |\partial_xu(t,x)|^2=0, \quad \text{ in }]0,T[\times \R \medskip
        \\
     u(0,x)=u_0(x), \quad \text{ for }x\in \R .    
        \end{cases}
      \end{equation}
Under mild assumptions on the initial value, the viscosity solution of \eqref{eq:1+1} is given by the Hopf formula
\begin{equation}\label{hopf}
u(t,x)=\inf_{y\in \R} \left[ u_0(y)+ \frac{(x-y)^2}{2t} \right].
\end{equation}

    \begin{example}\label{e1}{\rm 
      Consider equation \eqref{eq:1+1} with initial datum $u_0(x)=|x|$. Then the Hopf formula \eqref{hopf} yields    
       $$
      u(t,x)=
      \begin{cases}
        -x-\frac t2,\quad &\text{ for }x\le -t,
        \\
        \frac{x^2}{2t},\quad &\text{ for } -t<x<t ,
        \\
        x-\frac t2,\quad &\text{ for }x\geq t .
        \end{cases}
      $$
Although the initial datum is singular at $x=0$, it is easy to see that the solution is differentiable everywhere on $]0,T]\times \R$.       
     } \end{example}

      \begin{example}\label{e2}{\rm
      Consider now equation \eqref{eq:1+1}  with $u_0(x)=-|x|^\alpha$ for some fixed $1<\alpha<2$. Then $u_0 \in C^1(\R)$ but $u_0''(x) \to -\infty$ as $x \to 0$. If we consider points with $x=0$ we have, by the Hopf formula \eqref{hopf},
$$
u(0,t)= \inf_{y\in \R} \left[ -|y|^\alpha+ \frac{y^2}{2t} \right].
$$
The expression inside the brackets attains the minimum at two points $y=\pm(t\alpha)^\frac{1}{2-\alpha}$. Since the minimizer is not unique, well known properties of equations of the form \eqref{eq:cp} imply that $u$ is not differentiable at $(0,t)$ for any $t>0$. It can also be checked that $\gamma(t) \equiv 0$ is a singular generalized characteristic originating at $0$. 
}\end{example}

In this paper we show that, under assumptions (H) and (U$_0$), the regularity of a solution $u$ of \eqref{eq:cp} near a given point $(0,x_0)$ is related to the properties of $D_{pr}^- u_0(x_0)$, the proximal subdifferential of $u_0$ at $x_0$.
  We recall that $D_{pr}^- u_0(x_0)$ consists of the vectors $p$ such that, for some $K>0$, we have
   \begin{equation}
   \label{prox-intro}
   u_0(x) \geq u_0(x_0) + \langle p,x-x_0 \rangle - \frac K2 |x-x_0|^2 
   \end{equation}
for $x$ near $x_0$. The structure of this set is related to the semiconvexity property:  it can be shown that $u_0$ is semiconvex if and only if $D_{pr}^- u_0(x_0) \neq \emptyset$ and \eqref{prox-intro} is satisfied with a uniform $K$ for every $x_0$.
Examples 1 and 2 suggest that the regularity of the solution depends on the property of $D_{pr}^- u_0$ being nonempty, since $D_{pr}^- u_0(0)=[-1,1]$ in the former case and $D_{pr}^- u_0(0)=\emptyset$ in the latter. Our results confirm this intuition, although a complete answer to question (Q) has to take into account the properties of $D_{pr}^- u_0$ in a whole neighbourhood of $x_0$.

The starting point of our analysis is a one-to-one correspondence between the classical characteristics emanating from $(0,x_0)$ and the elements of $D_{pr}^- u_0(x_0)$, which is given in Theorem \ref{one-to-one}. Using, this, in Section 4 we give a criterion for the regularity of $u$ in a forward parabolic neighbourhood of a given point $(0,x_0)$ in terms of the local semiconvexity of $u_0$ near $x_0$, see Theorem \ref{t:semic}. If the local semiconvexity fails, we also show the existence of a weakly singular generalized characteristics emanating from $(0,x_0)$, that is, a characteristic contained in the closure $\overline {\Sigma (u)}$ of the singular set of $u$. We then give two examples where $u_0$ has nonempty proximal subdifferential at all points near a given $x_0$, but \eqref{prox-intro} is not satisfied with a uniform $K$. In such cases, the solution cannot be smooth in a whole forward parabolic neighbourhood of $(0,x_0)$, but the generation of singularities can occur in different ways. In the first case, Example 3, there is a unique characteristic emanating from $x_0$, which is classical but weakly singular: in fact, its points are regular but belong to the closure of the singular set. In Example 4, there are infinitely many characteristics starting from $x_0$, all of which are classical except for a singular one.
 
In Section 5, we focus our attention on a more specific class of Hamiltonians, which are quadratic in $p$ and to which the global propagation results of \cite{A2} apply. In this case, we give a result on the propagation of (strong) singularities by characterizing the generation points of singular generalized characteristics as those with empty proximal subdifferential. Finally, we show that, if the initial datum is semiconcave, our statements imply that all singularities of the initial data propagate forward in time, extending the results in the previous literature on the propagation from singular points at positive times.

\section{Preliminaries and assumptions}

Consider a function $v:\Omega \to \R$, with $\Omega \subset \R^N$ an open set, and let $x_0 \in \Omega$. 
The (Fr\'echet) {\em subdifferential} of $v$ at $x_0$ is defined as follows
$$
D^-v(x_0)=\left\{ p \in \R^n ~:~ \liminf_{x \to x_0} \frac{v(v)-v(x_0)- \langle p, x-x_0 \rangle }{|x-x_0|} \geq 0   \right\}.
$$
Given $K>0$, we say that $p_0$ is a {\em proximal $K$-subgradient} of $v$ at $x_0$ if
$$
v(x) \geq v(x_0) + \langle p_0, x-x_0 \rangle - \frac{K}{2}|x-x_0|^2,
$$
for $x$ in a neighbourhood of $x_0$. This is equivalent to saying that the pair $(p_0,K I)$, with $I$ the identity matrix, belongs to the second order subjet as defined in \cite{CIL}. The set of all proximal subgradients, that is,
$$
\begin{array}{rcl} D_{pr}^- v(x_0)  =  \{ p_0 \in \R^n & : & \mbox{$p_0$ is a proximal $K$-subgradient}  \\
&& \mbox{ of $v$ at $x_0$ for some $K>0$} \ \}
\end{array}
$$ 
is called the {\em proximal subdifferential} of $v$ at $x_0$. Clearly, $D_{pr}^- v(x_0) \subset D^-v(x_0)$ but in general the inclusion can be strict, see e.g. the initial datum $u_0$ of Example 2 at $x_0=0$.  We define in an analogous way the Fr\'echet and the proximal superdifferential of $v$, and we denote them by $D^+v$ and $D^+_{pr}v$ respectively.

If $v$ is a $C^{1,1}$ function, then its classical gradient is also a proximal gradient on both sides. More precisely, if $L$ is a Lipschitz constant for $Dv$ in a neighbourhood of a given point $x_0$, it is easy to see that $Dv(x_0)$ is a both an $L$-subgradient and an $L$-supergradient for $v$ at $x_0$.

A useful remark, which follows directly from the definition, is the following: if two functions $v,w$ are such that $v-w$ attains a local minimum at $x_0$, then any proximal $K$-subgradient of $w$ at $x_0$ is also a $K$-subgradient of $v$.

If $u=u(t,x)$ is defined in a subset of $\R \times \R^n$, we denote by $D^-_x u(t_0,x_0)$ the subdifferential of $u$ with respect to the $x$ variables, that is, the subdifferential of the function $x \mapsto u(t_0,x)$ at $x=x_0$. We define $D^+_xu(t_0,x_0)$ analogously.

We now recall the definition and main properties of semiconcave functions, and we refer to \cite{CS} for a detailed treatment.
A function $v$ is called {\em semiconcave} if it can be locally represented as $v=v_1+v_2$, with $v_1$ a concave function and $v_2 \in C^2$. It follows from the definition that a semiconcave function has non-empty proximal superdifferential at every point. As already mentioned in the introduction, a point $x$ in the domain of a semiconcave function $v$ is called {\em singular} if $v$ is non-differentiable at $x$. The set of singular points is denoted by $\Sigma(v)$ and it is a set of measure zero. Similarly, a function $v$ is called {\em semiconvex} if $-v$ is semiconcave. 

It is well known, see e.g. \cite{CL,CS}, that a viscosity solution $u$ of \eqref{eq:cp} is locally semiconcave on $]0,T[\times \Omega$, even if $u_0$ is merely Lipschitz continuous. In addition, if an open subset $V$ of $]0,T[\times \Omega$ does not contain singular points for $u$, then $u$ is $C^{1,1}_{loc}$ on $V$. For this reason, the set $\overline{\Sigma(u)}$, i.e. the closure of $\Sigma (u)$  in $]0,T[\times \Omega $, is called the $C^{1,1}$ {\em singular support} of $u$.

We recall that a $C^1$ arc $\gamma:[t_0,t_1] \to \Omega$ is called a {\em classical characteristic} associated with a solution $u$ of equation \eqref{eq:cp} if, for all $t \in ]t_0,t_1[$, the function $u$ is differentiable at $(t,\gamma(t))$ and we have
\begin{equation}\label{classicalchar}
\dot \gamma(t)=D_pH(t,\xi (t), D_xu(t,\gamma(t))).
\end{equation}
As in \cite{AC,CS}, we call a {\em generalized characteristic} associated with $u$ a curve $(t,\gamma(t)) \in [0,T[ \times \Omega$, where 
$\gamma:[t_0,t_1] \to \Omega$ is a Lipschitz function which satisfies
\begin{equation}\label{ggc}
\dot \gamma(t) \in \text{co }D_pH(t,\gamma (t),D_x^+u(t,\gamma (t)) ),\qquad \text{ for a.e. }t\in [t_0,t_1],
	\end{equation} 
where ``co'' stands for the convex hull. For simplicity, in the following we will often refer to the space component $\gamma(\cdot)$ as to the characteristic curve. We recall that there are other possible interesting generalized definitions of characteristic, which will  not be treated in this paper, e.g. the broken characteristics considered in \cite{KS,S1,SA}.

We have the following result on the existence of generalized characteristics, see \cite{AC,CY,Y}.
\begin{theorem}\label{gen-char}
Let $u$ be a semiconcave solution of \eqref{eq:cp}. Then, for any $(t_0,x_0) \in  ]0,T[\times \Omega$, there exists at least one generalized characteristic $\gamma:[t_0,\sigma[ \to \Omega$, where either  ${\rm dist}(\gamma(t),\partial \Omega) \to 0$ as $t \to \sigma$ or $\sigma =T$.
\end{theorem}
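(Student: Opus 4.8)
The plan is to read the defining relation \eqref{ggc} as a differential inclusion $\dot\gamma(t)\in F(t,\gamma(t))$ for the set-valued map
\begin{equation*}
F(t,x):=\mathrm{co}\,D_pH\big(t,x,D_x^+u(t,x)\big),
\end{equation*}
and to apply the classical existence theory for differential inclusions whose right-hand side is upper semicontinuous with nonempty compact convex values (Filippov–type existence). The first task is to verify that $F$ has exactly these structural properties. Since $u$ is semiconcave, for each $(t,x)\in{]0,T[}\times\Omega$ the set $D_x^+u(t,x)$ is nonempty, compact and convex, and the map $(t,x)\mapsto D_x^+u(t,x)$ is upper semicontinuous and locally bounded, the local boundedness coming from the local Lipschitz continuity of $u$; these are standard facts for semiconcave functions, for which I would refer to \cite{CS}. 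Because $H\in C^2$ by assumption \textbf{(H)}, the map $D_pH$ is continuous, so $D_pH(t,x,D_x^+u(t,x))$ is the image of a compact convex set under a continuous map, hence compact, and a routine argument shows it is again upper semicontinuous in $(t,x)$. Passing to the convex hull preserves compactness in finite dimensions (Carathéodory) and produces convex values while keeping upper semicontinuity. Thus $F$ is upper semicontinuous with nonempty compact convex values and is locally bounded.

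Next I would construct a local solution. Fix $(t_0,x_0)$ and a closed ball around $x_0$ contained in $\Omega$; on a neighbourhood $[t_0,t_0+\delta]\times\overline B$ the map $F$ is bounded by some constant $M$. The existence theorem for convex-valued upper semicontinuous inclusions then yields an absolutely continuous $\gamma$ on a small interval $[t_0,t_0+\delta']$ with $\gamma(t_0)=x_0$ solving $\dot\gamma(t)\in F(t,\gamma(t))$ for a.e.\ $t$, and since $|\dot\gamma|\le M$ the curve is in fact Lipschitz. This existence step is the technical heart of the argument: one builds approximate (polygonal or mollified) solutions, extracts a uniformly convergent subsequence by Ascoli–Arzelà, and passes to the limit in the inclusion. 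It is precisely here that the \emph{convexity} of the values of $F$, together with its upper semicontinuity, is indispensable — through the convergence theorem for differential inclusions — in order to guarantee that the limit curve still satisfies $\dot\gamma\in F(t,\gamma)$ rather than an inclusion into some larger set. I regard this limit passage as the main obstacle, the delicate point being the behaviour of $D_x^+u$ at singular points of $u$, where the superdifferential is genuinely set-valued and where upper semicontinuity must be used carefully.

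Finally I would extend the local solution to a maximal interval $[t_0,\sigma[$ by a standard continuation (Zorn's lemma) argument. Suppose $\sigma<T$ and that $\gamma$ remains in some compact subset $K\subset\Omega$ for $t$ near $\sigma$. Then the uniform bound $|\dot\gamma|\le M_K$ on $K$ makes $\gamma$ uniformly continuous on $[t_0,\sigma[$, so $\gamma(t)$ converges to a point $x_*\in K\subset\Omega$ as $t\to\sigma$ and $\gamma$ extends continuously to $\sigma$; reapplying the local existence result at $(\sigma,x_*)$ would then prolong the characteristic beyond $\sigma$, contradicting maximality. Hence, if $\sigma<T$, the curve must leave every compact subset of $\Omega$, that is $\dist(\gamma(t),\partial\Omega)\to 0$ as $t\to\sigma$, which is exactly the stated dichotomy.
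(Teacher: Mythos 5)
Your approach is sound, and in fact it is the standard one: the paper itself gives no proof of this theorem, stating it as a known result with references to \cite{AC,CY,Y}, and in those works (most explicitly in \cite{CY}) the existence of a generalized characteristic is obtained exactly as you propose, by viewing \eqref{ggc} as a differential inclusion $\dot\gamma\in F(t,\gamma)$ with $F$ upper semicontinuous with nonempty compact convex values and invoking the classical existence theory (Aubin--Cellina/Filippov). Your structural verification is correct, with one point worth making explicit: the set-valued map appearing in \eqref{ggc} is the \emph{partial} superdifferential $D^+_xu$, and its upper semicontinuity in $(t,x)$ should be justified, e.g.\ via the fact that for a (jointly) semiconcave function the partial superdifferential is the projection of the full superdifferential $D^+u(t,x)$ onto the $x$-components, together with the upper semicontinuity of $D^+u$; both facts are in \cite{CS}.

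The one step of your argument that needs tightening is the final dichotomy. Your continuation argument correctly shows that if $\sigma<T$ then $\gamma$ cannot remain in any compact subset $K\subset\Omega$ on a left neighbourhood of $\sigma$ (otherwise the uniform speed bound on $K$ lets $\gamma$ extend continuously to $\sigma$ and local existence prolongs it). But ``$\gamma$ eventually leaves every compact subset'' is not literally the same as $\dist(\gamma(t),\partial\Omega)\to0$: a priori the curve could oscillate between a fixed compact set and the boundary, or (if $\Omega$ is unbounded) escape to infinity while staying far from $\partial\Omega$. The oscillation scenario is excluded by a quantitative use of the local speed bound: each crossing of an annular region $\{\,\epsilon/2\le\dist(\cdot,\partial\Omega)\le\epsilon\,\}$ intersected with a ball takes at least a fixed positive time, so infinitely many returns are incompatible with $\sigma<\infty$. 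The escape-to-infinity scenario must either be ruled out by a global bound (e.g.\ when $u$ is globally Lipschitz, as in the reduction used elsewhere in the paper, the velocity bound is uniform and $\gamma$ stays bounded on finite intervals) or absorbed into the statement by reading the alternative as ``$\gamma$ leaves every compact subset of $\Omega$.'' With that repair, your proof is complete and coincides in substance with the argument in the cited references.
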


\begin{remark}\label{initial}{\rm
Even if no assumption is made on the semiconcavity of the initial data, it is easy to see that the conclusion of Theorem \ref{gen-char} also holds in the case $t_0=0$. In fact, we can obtain a generalized characteristic starting from $(0,x_0)$ as the limit of characteristics $\{ \gamma_h\}$ originating from points $(t_h,x_0)$, with $t_h \downarrow 0$, using a standard compactness argument, see e.g. \cite{Y}.
}\end{remark}

Since the results of our paper are local, they do not depend on the behaviour at infinity of $H$ and $u_0$. For this reason, it is convenient to perform part of our analysis under some additional assumptions on the data which will be removed later. In the rest of the section, we restrict ourselves to the case where $\Omega=\R^n$ and we assume that $H$ is a Tonelli Hamiltonian in the sense of \cite{CY}, namely it satisfies:  \medskip

\noindent {\bf (H1)} (Uniform convexity) There exists a nonincreasing function $\nu:[0,+\infty[ \to ]0,+\infty[$ such that $H_{pp}(t,x,p) \geq \nu(|p|) I$, for all $t,x,p$, where $I$ is the identity matrix. \\
\noindent {\bf (H2)} (Superlinear growth) There exist two superlinear functions $\theta, \bar \theta: [0,+\infty[ \to [0,+\infty[$ and a constant $c_0$ such that $\theta(|p|) -c_0 \leq H(t,x,p) \leq \bar \theta (|p|)$, for all $t,x,p$. \\
\noindent {\bf (H3)} (Uniform regularity) There exists a nondecreasing function $K:[0,+\infty[ \to [0,+\infty[$ such that $H$ and all its first and second derivatives are bounded by $K(|p|)$, for all $(t,x,p)$. \medskip

        In addition, we assume

        \noindent {\bf (U$_0$*)} the initial datum $u_0:\R^n \to \R$ is globally Lipschitz. \medskip

We underline that these assumptions will be removed later and that the main results of the paper, except for the ones in Section 5, only require the conditions (H) and (U$_0$) stated in the introduction.

        Under hypotheses (H1)---(H3) and (U$_0$*), it is well known that problem \eqref{eq:cp} has a unique viscosity solution $u$, that can be represented as
\begin{equation}\label{valuef}  u(t,x)   = 
 \inf_{\stackrel{\xi \in W^{1,1}([0,t])}{\xi(t)=x}}\left( u_0( \xi(0) )+\int_0^t L(\tau,\xi(\tau),\dot \xi (\tau) )\, d\tau \right),  \end{equation}
for all $(t,x)\in ]0,T]\times \R^n$. Here the Lagrangian function $L$ is given by
$$
L(t,x,q)=\max_{p \in \R^n} \left[ p \cdot q - H(t,x,p) \right].
$$
If $\xi$ is any minimizer in \eqref{valuef}, the function $p(s):=D_q L(s, \xi(s),\dot \xi(s))$ is called the {\em dual arc} associated with $\xi(\cdot)$. We recall the following properties,  see e.g. \cite[Theorems 6.3.3 and 6.4.8]{CS}. 

\begin{proposition}\label{classical}
Let $x \in \R^n$ and $t>0$ be fixed. Then the infimum in \eqref{valuef} is attained. If $\xi(\cdot)$ is a minimizer, and if $p(\cdot)$ is the associated dual arc, then $\xi,p$ are both of class $C^1$ and solve the hamiltonian system
\begin{equation}\label{ham-syst}
\left\{ \begin{array}{l}
\dot \xi(s)= D_p H(s,\xi(s),p(s)) \medskip\\
\dot p(s)= -D_x H(s,\xi(s),p(s)).
\end{array}\right.
\end{equation}
The function $u$ is differentiable at $(t,x)$ if and only if the minimizer in \eqref{valuef} is unique.
In addition, $u$ is differentiable at $(s,\xi(s))$ for all $s \in ]0,t[$ and satisfies $D_xu(s,\xi(s))=p(s)$.
\end{proposition}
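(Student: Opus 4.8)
The plan is to prove the four assertions in order, following the classical scheme of the calculus of variations.

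\emph{Existence.} First I would obtain a minimizer by the direct method. Under (H1)--(H3) the Lagrangian $L(t,x,q)=\max_p[\,p\cdot q-H(t,x,p)\,]$ is convex in $q$, and the bounds $\theta(|p|)-c_0\le H\le\bar\theta(|p|)$ together with the uniform convexity force $L$ to be superlinear in $q$, with a superlinearity modulus $\ell$ independent of $(t,x)$. Given a minimizing sequence $\xi_n$ with $\xi_n(t)=x$, the bound $L\ge\ell(|q|)$ and the global Lipschitz bound on $u_0$ from (U$_0$*) produce a uniform bound on $\int_0^t\ell(|\dot\xi_n|)\,d\tau$; by the de la Vall\'ee-Poussin criterion the velocities are equi-integrable, hence weakly relatively compact in $L^1$ by Dunford--Pettis, and the $\xi_n$ converge uniformly to some $\xi$ with $\xi(t)=x$. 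Tonelli's lower semicontinuity theorem, which uses precisely the convexity of $L$ in $q$, then shows that $\xi$ realizes the infimum in \eqref{valuef}.

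\emph{Regularity and the Hamiltonian system.} Since $H\in C^2$ with $D_{pp}H>0$, the partial Legendre transform $q\mapsto p=D_qL(t,x,q)$ is a $C^1$ diffeomorphism with inverse $p\mapsto D_pH(t,x,p)$; thus $\dot\xi(s)=D_pH(s,\xi(s),p(s))$ wherever $p(s):=D_qL(s,\xi(s),\dot\xi(s))$ is defined. A minimizer satisfies the integral (Du Bois-Reymond) form of the Euler--Lagrange equation, $p(s)=p(0)+\int_0^s D_xL(\tau,\xi(\tau),\dot\xi(\tau))\,d\tau$ for a.e. $s$, which shows that $p$ has an absolutely continuous representative. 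Feeding this into $\dot\xi=D_pH(\cdot,\xi,p)$ gives $\dot\xi$ a continuous representative, so $\xi\in C^1$; then, using the duality relation $D_xL=-D_xH$ at conjugate points, $\dot p=-D_xH(\cdot,\xi,p)$ is continuous, whence $p\in C^1$ and \eqref{ham-syst} holds classically.

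\emph{Differentiability.} Each minimizer $\xi$ produces a terminal covector $p(t)$, and one checks that $(-H(t,x,p(t)),p(t))\in D^+u(t,x)$, the superdifferential being exactly the convex hull of such vectors over all minimizers. Distinct minimizers yield distinct solutions of the Hamiltonian system and hence distinct terminal covectors (two with a common $(x,p(t))$ would coincide by uniqueness for \eqref{ham-syst}); therefore $D^+u(t,x)$ is a singleton iff the minimizer is unique. Since $u$ is locally semiconcave for positive times, it is differentiable at $(t,x)$ iff its superdifferential there is a singleton, which settles the characterization. For the final claim, the dynamic programming principle shows that $\xi|_{[0,s]}$ is a minimizer for the endpoint $(s,\xi(s))$ when $s\in\,]0,t[$; a second minimizer there would, after concatenation with $\xi|_{[s,t]}$, create a corner in a minimizer to $(t,x)$, contradicting the $C^1$ regularity just proved. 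Hence the minimizer to $(s,\xi(s))$ is unique and, by the previous step, $u$ is differentiable at $(s,\xi(s))$ with $D_xu(s,\xi(s))=p(s)$.

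I expect the regularity step to be the main obstacle: passing from a merely absolutely continuous minimizer to a $C^1$ solution of \eqref{ham-syst} requires an a priori Lipschitz estimate on the minimizer and the exclusion of the Lavrentiev phenomenon, which is exactly where the uniform regularity bound (H3) enters. By comparison, the differentiability statement is essentially a repackaging of the correspondence between minimizers and terminal covectors together with the semiconcavity of $u$.
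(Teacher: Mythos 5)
Your proposal is correct and follows essentially the same route as the paper, which does not prove this proposition but cites it as standard, referring to \cite[Theorems 6.3.3 and 6.4.8]{CS}: the argument there is exactly your scheme of Tonelli's direct method for existence, the Du Bois-Reymond/Euler--Lagrange bootstrap with the Legendre duality to get $C^1$ solutions of \eqref{ham-syst}, and the identification of $D^+u(t,x)$ with the (convex hull of) terminal covectors of minimizers combined with semiconcavity for the differentiability statements. Your closing remarks correctly locate where (H3) enters (Lipschitz estimates on minimizers, exclusion of the Lavrentiev phenomenon), and your ``corner'' argument for interior uniqueness is, as you implicitly use, completed by backward uniqueness for \eqref{ham-syst} once the concatenated minimizer is known to be $C^1$.
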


 It turns out that the minimizing arcs in \eqref{valuef} are also classical characteristics, as recalled in the next statement.

\begin{proposition}\label{reg}
Let $\xi:[0,t_0] \to \R^n$ be a Lipschitz arc. Then $\xi$ is a minimizer in \eqref{valuef}, with $(t,x)=(t_0,\xi(t_0))$, if and only if $\xi$ is a classical characteristic associated with $u$.
\end{proposition}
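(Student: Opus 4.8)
The plan is to establish the two implications separately. The direct implication (a minimizer is a classical characteristic) is essentially a repackaging of Proposition \ref{classical}, while the converse requires an action identity along the characteristic.

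First I would argue that any minimizer $\xi$ in \eqref{valuef} with $(t,x)=(t_0,\xi(t_0))$ is a classical characteristic. Letting $p(\cdot)$ be the associated dual arc, Proposition \ref{classical} tells us that $(\xi,p)$ is $C^1$ and solves the Hamiltonian system \eqref{ham-syst}, that $u$ is differentiable at $(s,\xi(s))$ for every $0<s<t_0$, and that $D_xu(s,\xi(s))=p(s)$. Inserting the last relation into the first equation of \eqref{ham-syst} turns it into $\dot\xi(s)=D_pH(s,\xi(s),D_xu(s,\xi(s)))$, which is exactly \eqref{classicalchar}; since $\xi$ is $C^1$ and $u$ is differentiable along it, $\xi$ is a classical characteristic.

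For the converse, suppose $\xi$ is a classical characteristic and set $g(s):=u(s,\xi(s))$ and $p(s):=D_xu(s,\xi(s))$. For $0<s<t_0$ the function $u$ is differentiable at $(s,\xi(s))$, hence it solves the equation classically there, giving $\partial_t u(s,\xi(s))=-H(s,\xi(s),p(s))$. Since $u$ is locally Lipschitz on $]0,T[\times\Omega$ and $\xi\in C^1$, the map $g$ is continuous on $[0,t_0]$ and, by the chain rule at each interior point of differentiability,
$$
g'(s)=\partial_t u(s,\xi(s))+p(s)\cdot\dot\xi(s)=p(s)\cdot\dot\xi(s)-H(s,\xi(s),p(s)).
$$
Here I would invoke the Legendre duality between $H$ and $L$: under the convexity and superlinearity assumptions (H1)--(H2), the relation $\dot\xi(s)=D_pH(s,\xi(s),p(s))$ defining a classical characteristic is equivalent to $L(s,\xi(s),\dot\xi(s))=p(s)\cdot\dot\xi(s)-H(s,\xi(s),p(s))$, so that $g'(s)=L(s,\xi(s),\dot\xi(s))$ for every $0<s<t_0$.

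It then remains to integrate and identify the value. Applying the fundamental theorem of calculus on $[\epsilon,t_0]$ and letting $\epsilon\to0^+$, using the continuity of $u$ up to $t=0$, the initial condition $u(0,\cdot)=u_0$, and the continuity (hence boundedness) of $s\mapsto L(s,\xi(s),\dot\xi(s))$ on $[0,t_0]$, I obtain
$$
u(t_0,\xi(t_0))=u_0(\xi(0))+\int_0^{t_0}L(s,\xi(s),\dot\xi(s))\,ds.
$$
Since by \eqref{valuef} the left-hand side is the infimum over all $W^{1,1}$ arcs ending at $\xi(t_0)$ and $\xi$ is one such arc realizing this value, $\xi$ is a minimizer. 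The only delicate point, and the place where care is needed, is the passage to the limit $\epsilon\to0^+$: the identity $g'=L(\cdot,\xi,\dot\xi)$ is available only for $0<s<t_0$, where the interior Lipschitz regularity of $u$ holds, and it must be extended up to the initial time by continuity rather than by any regularity of $u$ at $t=0$, which is precisely where semiconcavity is unavailable.
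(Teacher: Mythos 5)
Your proof is correct and takes essentially the same route as the paper's: the forward implication via Proposition \ref{classical}, and the converse by differentiating $u$ along the characteristic, using the equation at points of differentiability and Legendre duality to identify $\frac{d}{ds}u(s,\xi(s))$ with $L(s,\xi(s),\dot\xi(s))$, then integrating to show $\xi$ attains the value in \eqref{valuef}. Your explicit treatment of the limit $\epsilon\to 0^+$ at the initial time merely spells out a step the paper leaves implicit.
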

\begin{proof}
The property that a minimizer is a classical characteristic follows from the previous proposition. The converse implication is obtained by a direct computation. Suppose that $\xi$ is a classical characteristic, and set $p(t)=D_x u(t,\xi(t))$, for $t \in ]0,t_0[$. Then 
$\dot \xi(t)=H_p(t,\xi(t),p(t))$ and therefore, by well-known properties of the Legendre transform,
$$
L(t,\xi(t),\dot \xi(t))+ H(t,\xi(t),p(t)) = \dot \xi(t) \cdot p(t).
$$
Since $u$ satisfies the equation in the classical sense at the points of differentiability, it follows
\begin{eqnarray*}
\frac{d}{dt} u(t,\xi(t)) & = & u_t(t,\xi(t)) + p(t) \cdot \dot \xi(t) \\
& = & - H(t,\xi(t),p(t)) + p(t) \cdot \dot \xi(t)= L(t,\xi(t),\dot \xi(t)).
\end{eqnarray*}
We conclude
$$
u(t_0,\xi(t_0))=u_0(\xi(0)) + \int_0^{t_0} L(t,\xi(t),\dot \xi(t)) dt,
$$
which implies that $\xi$ is a minimizer in \eqref{valuef}.
\end{proof}

We now recall the definition and the basic properties of the action functional associated with our problem, see e.g. \cite{B,CC}. We define, for given $x,y \in \R^n$ and $t>0$,
\begin{equation}\label{fund-sol}
\ma_t(y,x)=\inf_{\stackrel{\xi \in W^{1,1}([0,t])}{\xi(0)=y, \xi(t)=x}} \left\{ \int_0^t L(s,\xi(s),\dot \xi(s)) \, ds 
\right\}.
\end{equation}

Under our hypotheses, the infimum in \eqref{fund-sol} is a minimum. As before, any minimizer $\xi(\cdot)$ is of class $C^1$ and satisfies, together with its dual arc $p(s):=D_q L(s, \xi(s),\dot \xi(s))$, the hamiltonian system \eqref{ham-syst}. 
We can restate \eqref{valuef} as
\begin{equation}\label{valuef2}
 \qquad  u(t,x)   =  \inf_{y \in \R^n} \left( u_0( y )+ \ma_t(y,x) \right). 
 \end{equation}
Clearly, an arc $\xi(\cdot)$ is a minimizer in \eqref{valuef} if and only if, after setting $y=\xi(0)$, $\xi(\cdot)$ is a minimizer in \eqref{fund-sol} and $y$ is a minimizer in \eqref{valuef2}.

We need the following properties of $\ma$, see Proposition 2.2 in \cite{B}, Propositions B.8 and B.9 in \cite{CC}. The analysis in \cite{CC} is performed in the case of a Hamiltonian which does not depend on $t$; however, the proofs extend to our setting in a straightforward way.

\begin{lemma}\label{action}
For any $\Lambda_0>0$, there exist $t_0 >0$ and $c_0>0$ such that, for any fixed $t \in ]0,t_0]$ and $x \in \R^n$, the following properties hold. \\
{\bf (i)}
For any $y$ in the ball $B_{ \Lambda_0 t}(x)=\{ y \ : \ |x-y| <  \Lambda_0 t \}$, the hamiltonian system \eqref{ham-syst} with endpoint conditions for $\xi$
\begin{equation}\label{bound}
\xi(0)=x, \qquad \xi(t)=y
\end{equation}
has a unique solution $(\xi(\cdot),p(\cdot))$. In addition, the infimum in the definition \eqref{fund-sol} of $\ma_t(y,x)$ is attained by a unique arc $\xi(\cdot)$, which is the space component of the solution to \eqref{ham-syst}--\eqref{bound}.
 \\
{\bf (ii)} 
The function $y \to \ma_t(y,x)$  is of class $C^{1,1}$ in the ball $ B_{ \Lambda_0 t}(x)$, with $C^{1,1}$ norm only depending on $\Lambda_0,t$. The derivative is given by
$$
D_y {\mathcal A}_t(y,x) = -D_q L(0, \xi(0), \dot \xi(0)),
$$
where $\xi(\cdot)$ is the unique minimizer in the definition of ${\mathcal A}_t(y,x)$. \medskip \\
{\bf (iii)}
The function
$
y \to \ma_t(y,x) - \dfrac{c_0}{t}|y|^2
$
is convex in the ball $ B_{ \Lambda_0 t}(x)$.
\end{lemma}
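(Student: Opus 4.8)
\section*{Proof proposal}

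The plan is to deduce all three statements from the short-time behaviour of the Hamiltonian flow \eqref{ham-syst}, viewing the two-point boundary value problem as an inversion of the exponential map and tracking carefully how every constant scales with $t$.

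I would begin with an \emph{a priori velocity bound} for minimizers. Fix $y \in B_{\Lambda_0 t}(x)$ and let $\xi$ be a minimizer in \eqref{fund-sol}, so $\xi(0)=y$, $\xi(t)=x$. Comparing its action with that of the segment $s \mapsto y + \tfrac{s}{t}(x-y)$, whose speed is below $\Lambda_0$ and which stays in a fixed neighbourhood of $x$, gives $\ma_t(y,x) \le C t$ with $C=C(\Lambda_0)$. Together with the superlinear lower bound on $L$ coming from (H2), this bounds $\int_0^t |\dot\xi|$ and hence, via the fact that $\xi$ solves \eqref{ham-syst} (Proposition \ref{reg}) and the uniform control of the Hamiltonian vector field in (H3), the momentum $|p(s)| \le \Lambda_1=\Lambda_1(\Lambda_0)$ along every minimizer, uniformly for small $t$. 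From now on all estimates take place in the compact momentum set $\{|p|\le \Lambda_1\}$, on which (H1) and (H3) furnish uniform bounds $\nu_0 I \le D_{pp}H \le K_0 I$ and uniform control of the derivatives of $H$.

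For part (i) I would invert the exponential map. Solving \eqref{ham-syst} backward from the data $(\xi(t),p(t))=(x,q)$ produces $y=y(q):=\xi(0)$ and $p_0(q):=p(0)$, with the expansions $y(q)=x-t\,D_pH(0,x,q)+o(t)$ and $p_0(q)=q+o(1)$, uniform for $|q|\le\Lambda_1$. Thus $q \mapsto (y(q)-x)/t$ converges, together with its $q$-derivative, to the diffeomorphism $q\mapsto -D_pH(0,x,q)$ as $t\to0$, the nondegeneracy $D_{pp}H\ge\nu_0 I$ being exactly what makes the limit invertible. By the quantitative inverse function theorem there is $t_0=t_0(\Lambda_0)$ so that for $t\le t_0$ the map $q\mapsto y(q)$ is a $C^1$ diffeomorphism of $\{|q|\le\Lambda_1\}$ onto a set containing $B_{\Lambda_0 t}(x)$; this gives existence and uniqueness of the solution of the boundary value problem, and uniqueness of the minimizer follows since any minimizer is such a solution with $|q|\le\Lambda_1$.

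Parts (ii) and (iii) then follow by differentiating in $y$. The first variation formula gives $D_y\ma_t(y,x)=-p(0)=-D_qL(0,\xi(0),\dot\xi(0))$; since $y\mapsto q$ is the $C^1$ inverse just constructed, $\ma_t(\cdot,x)$ is $C^{1,1}$ with norm governed by the Lipschitz constant of that inverse, which depends only on $\Lambda_0$ and $t$. Differentiating once more and using the Jacobi-field expansions $\partial y/\partial q=-t\,D_{pp}H(0,x,q)+o(t)$ and $\partial p(0)/\partial q=I+o(1)$ yields $D^2_{yy}\ma_t(y,x)=-(\partial p(0)/\partial q)(\partial y/\partial q)^{-1}\ge \tfrac{c}{t}I$ for $t$ small, where $c$ depends on $K_0$; choosing $c_0\le c/2$ makes $y\mapsto\ma_t(y,x)-\tfrac{c_0}{t}|y|^2$ convex. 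The main obstacle is precisely the control of constants as $t\to0$: the exponential map degenerates at $t=0$, its derivative being $O(t)$, so the inverse function theorem and the Hessian bound must be applied with explicit $t$-dependence, and this degeneracy is what produces the $1/t$ scaling in (ii) and (iii). The technical heart is carrying out these expansions with error terms uniform in $|q|\le\Lambda_1$ and in $x$, which the a priori momentum bound makes possible.
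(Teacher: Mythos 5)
Your argument is correct in outline and is, in substance, the proof that the paper points to: the paper does not prove Lemma \ref{action} itself, but quotes it from Proposition 2.2 of \cite{B} and Propositions B.8--B.9 of \cite{CC} (with the remark that the proofs extend to time-dependent Hamiltonians), and those references proceed essentially as you do --- a priori action and momentum compactness for minimizers from (H2)--(H3), short-time inversion of the $O(t)$-degenerate extremal map with explicit $t$-dependence, the first-variation identity $D_y{\mathcal A}_t(y,x)=-p(0)$, and the Jacobi-field expansion $D^2_{yy}{\mathcal A}_t\approx \frac1t\left(D_{pp}H\right)^{-1}\ge \frac{c}{t}I$, which simultaneously yields the $C^{1,1}$ bound in (ii) and the $c_0/t$-uniform convexity in (iii). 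Two details are worth tightening: first, the inverse function theorem is local, so the fact that the image of $q\mapsto y(q)$, $|q|\le\Lambda_1$, covers \emph{all} of $B_{\Lambda_0 t}(x)$ is most cleanly obtained from Tonelli existence of minimizers combined with your momentum bound (every $y$ in the ball is reached by a minimizer, hence by an extremal with $|q|\le\Lambda_1$), injectivity then giving uniqueness; second, this argument proves uniqueness for the two-point problem \eqref{ham-syst}--\eqref{bound} only within the momentum range $\{|p(t)|\le\Lambda_1\}$, which is slightly weaker than the literal statement but is all that the paper (and the cited references) ever use, since only minimizing extremals enter the subsequent proofs. You also implicitly corrected a typo in the statement: the endpoint conditions in \eqref{bound} are swapped relative to the definition \eqref{fund-sol}, and your orientation ($\xi(0)=y$, $\xi(t)=x$) is the one consistent with the derivative formula in (ii).
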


\section{A one-to-one correspondence}

The following result shows that there is a one-to-one correspondence between the elements of $D_{pr}^- u_0(y_0)$ and the classical characteristics starting at $(0,y_0)$. 

\begin{theorem}\label{one-to-one}
Under hypotheses {\rm (H)}, {\rm (U$_0$)}, let $u \in {\rm Lip}_{loc}([0,T] \times \Omega)$ be a solution of problem \eqref{eq:cp}. Then, for any $y_0 \in \Omega$, the following properties hold.\\
{\bf (i)} Given $p_0 \in D_{pr}^- u_0(y_0)$, let $(\xi(\cdot),p(\cdot))$ be the solution of \eqref{ham-syst} with initial conditions $\xi(0)=y_0, \quad p(0)=p_0$. Then there exists $\tau_0>0$ such that $\xi(\cdot)$ is a classical characteristic on $[0,\tau_0]$. \\
{\bf (ii)} Conversely, let $\xi:[0,\tau_0] \to \Omega$ be a classical characteristic starting at $y_0$. Then there exists a unique $p_0  \in D_{pr}^- u_0(y_0)$ such that $\xi(\cdot)$ is the first component the solution of \eqref{ham-syst} with initial conditions $\xi(0)=y_0, \quad p(0)=p_0$.
\end{theorem}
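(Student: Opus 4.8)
\emph{Overview.} Both assertions are local, and the proximal subdifferential is a local notion, so the plan is first to reduce to the Tonelli setting of Section 2. Fixing a small ball $B_\rho(y_0)$ whose closure lies in $\Omega$, one modifies $H$ for large $|x|$ and $|p|$ and extends $u_0$ so that (H1)--(H3) and (U$_0$*) hold, leaving the data unchanged on a neighbourhood of $\{0\}\times\overline{B_\rho(y_0)}$ in the relevant range of momenta. Since $u_0$ is locally Lipschitz, every $p_0\in D^-_{pr}u_0(y_0)$ is bounded, so the trajectories under consideration stay in the unmodified region for short times; by uniqueness of the viscosity solution the modified value function coincides with $u$ near $(0,y_0)$. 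I may therefore use the representation \eqref{valuef}--\eqref{valuef2} and Lemma \ref{action}. The engine for both parts is Proposition \ref{reg}: an arc $\xi$ on $[0,\tau_0]$ is a classical characteristic if and only if it is a minimizer in \eqref{valuef}, i.e. if and only if $\xi$ minimizes the action between its endpoints and $y_0=\xi(0)$ minimizes $g(y):=u_0(y)+\ma_{\tau_0}(y,x)$, where $x=\xi(\tau_0)$. The link to the momentum is the identity $D_y\ma_{\tau_0}(y_0,x)=-D_qL(0,\xi(0),\dot\xi(0))=-p_0$ furnished by Lemma \ref{action}(ii).

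\emph{Part (i).} Given $p_0\in D^-_{pr}u_0(y_0)$, let $(\xi,p)$ solve \eqref{ham-syst} with $\xi(0)=y_0$, $p(0)=p_0$, and set $x=\xi(\tau_0)$. For $\tau_0$ small, $\xi$ is the unique minimizer of $\ma_{\tau_0}(y_0,x)$ by Lemma \ref{action}(i), so it suffices to show that $y_0$ minimizes $g$. Lemma \ref{action}(iii) makes $y\mapsto\ma_{\tau_0}(y,x)$ strongly convex on $B_{\Lambda_0\tau_0}(x)$, and combined with the derivative identity this gives
\begin{equation*}
\ma_{\tau_0}(y,x)\ \geq\ \ma_{\tau_0}(y_0,x)-\langle p_0,y-y_0\rangle+\frac{c_0}{\tau_0}\,|y-y_0|^2 .
\end{equation*}
Adding the proximal inequality \eqref{prox-intro} for $u_0$, valid with some $K>0$ on a fixed neighbourhood of $y_0$, the linear terms cancel and
\begin{equation*}
g(y)\ \geq\ g(y_0)+\Big(\frac{c_0}{\tau_0}-\frac{K}{2}\Big)|y-y_0|^2
\end{equation*}
for $y\in B_{\Lambda_0\tau_0}(x)$. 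Choosing $\tau_0<2c_0/K$ makes the coefficient positive, so $y_0$ is the strict minimizer of $g$ on that ball. A standard a priori bound, coming from the superlinearity of $L$, places the global minimizer $y^\ast$ of $g$ within distance $C\tau_0$ of $x$; once $\Lambda_0$ is large enough this puts $y^\ast$ inside $B_{\Lambda_0\tau_0}(x)$, whence $y^\ast=y_0$ and $\xi$ is a classical characteristic.

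\emph{Part (ii).} Let $\xi:[0,\tau_0]\to\Omega$ be a classical characteristic and $x=\xi(\tau_0)$. By Proposition \ref{reg} it is a minimizer in \eqref{valuef}, so $y_0$ minimizes $g$. Let $p(\cdot)$ be its dual arc and set $p_0:=p(0)$; then $(\xi,p)$ solves \eqref{ham-syst} with $\xi(0)=y_0$, $p(0)=p_0$, which yields existence. To see that $p_0\in D^-_{pr}u_0(y_0)$, write $g=u_0-w$ with $w:=-\ma_{\tau_0}(\cdot,x)$: since $w$ is $C^{1,1}$ near $y_0$ (Lemma \ref{action}(ii)), its gradient $-D_y\ma_{\tau_0}(y_0,x)=p_0$ is a proximal subgradient of $w$, and because $u_0-w=g$ attains a local minimum at $y_0$, the elementary remark that such a subgradient passes to $u_0$ shows $p_0\in D^-_{pr}u_0(y_0)$. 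Uniqueness holds because the dual arc is recovered from $\xi$ alone: strict convexity of $H$ in $p$ (hypothesis (H)) makes $p\mapsto D_pH(0,y_0,p)$ injective, so $p_0$ is the unique momentum with $D_pH(0,y_0,p_0)=\dot\xi(0)$, equivalently $p_0=D_qL(0,y_0,\dot\xi(0))$.

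\emph{Main obstacle.} The crux is the matching of scales in Part (i): the convexity gained from the action has modulus of order $c_0/\tau_0$, which blows up as $\tau_0\to0$, while the concave defect of $u_0$ at $y_0$ is the fixed constant $K$ of \eqref{prox-intro}. One must verify that $B_{\Lambda_0\tau_0}(x)$ shrinks into the fixed neighbourhood where \eqref{prox-intro} holds, that it still captures the global minimizer of $g$, and that a single $\Lambda_0$ bounds both the velocity of the constructed trajectory and that of the value-function minimizers. The reduction to the Tonelli framework, though routine, must be arranged so that all these balls stay in the unmodified region.
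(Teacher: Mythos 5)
Your proposal is correct and takes essentially the same route as the paper: localization to the Tonelli setting (H1)--(H3), (U$_0$*), then Lemma \ref{action} combined with Proposition \ref{reg}, with part (i) resting on the strong convexity modulus $c_0/\tau_0$ beating the proximal constant $K/2$ and part (ii) on transferring the gradient of the $C^{1,1}$ action $-D_y\ma_{\tau_0}(y_0,x)$ to a proximal subgradient of $u_0$ at a minimum point, plus strict convexity of $H$ in $p$ for uniqueness. The only (inessential) variation is your confinement of competing minimizers to $B_{\Lambda_0\tau_0}(x)$ via a superlinearity a priori bound, whereas the paper notes that any minimizer is a classical characteristic whose dual arc equals $D_xu$, so its speed is at most $\Lambda_0/2$ with $\Lambda_0$ defined through ${\rm Lip}(u)$.
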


\begin{proof}
Let us first prove the result in the case where $\Omega=\R^n$, and the additional assumptions (H1)---(H3) and (U$_0^*$) hold. In the final part of the proof, we will see how the general case can be reduced to this one by a localization argument.

It is well known that, under the global Lipschitz assumption on $u_0$, the solution $u$ is also globally Lipschitz. We then set
\begin{equation}\label{lambdazero}
\Lambda_0 = 2 \sup \{ D_p H(t, x, p) \ : \ (t,x) \in [0,T[ \times \R^n, |p| \leq {\rm Lip}(u) \}.
\end{equation}

Let $p_0$ be a proximal subgradient of $u_0$ at $y_0$. By definition, there exist $K,r>0$ such that
\begin{equation}\label{subgr}
u_0(y) \geq u_0(y_0) + \langle p_0, y-y_0 \rangle - \frac K2 |y-y_0|^2, \qquad \mbox{ for all $y \in B_r(y_0)$}.
\end{equation}

Now let $(\xi(\cdot),p(\cdot))$ be the solution of \eqref{ham-syst} with initial conditions $\xi(0)=y_0, \quad p(0)=p_0$. Since
$|p_0| \leq {\rm Lip}(u)$, we have
$$|\dot \xi(0)| = |D_p H(0, x_0, p_0)| \leq \Lambda_0/2,
$$ and so there exists $\tau_0 >0$ such that 
\begin{equation}\label{speed}
|\dot \xi(t)| \leq \Lambda_0, \qquad  \forall t \in [0,\tau_0].
\end{equation}
Let $t_0, c_0$ be the constants associated with $\Lambda_0$ in Lemma \ref{action}.
We can assume that $\tau_0$ above is chosen small enough to satisfy
\begin{equation}\label{tzero}
\tau_0 \leq t_0, \qquad \Lambda_0 \tau_0 \leq r, \qquad \frac{c_0}{\tau_0} > \frac K2,
\end{equation}
where $r,K$ are the constants in \eqref{subgr}.

We now set $x_0=\xi(\tau_0)$.  By our choice of $\tau_0$ and by Lemma \ref{action}(i), we know that the minimizer for $\ma_{\tau_0}(y_0,x_0)$ is unique and coincides with the arc $\xi$ defined above. In addition, by \eqref{speed} and by Lemma \ref{action}(ii), the function $y \to \ma_{\tau_0}(y,x_0)$ is differentiable at $y=y_0$ and satisfies
$$
\left.  D_y \ma_{\tau_0} (y,x_0) \right\rvert_{y=y_0} = - D_q L(0, \xi(0),\dot \xi(0)) = - p(0) = - p_0.
$$
The convexity property of Lemma \ref{action}(iii) gives
\begin{eqnarray*}
{\mathcal A}_{\tau_0}(y,x_0) & = & {\mathcal A}_{\tau_0}(y,x_0) - \frac{c_0}{\tau_0}|y|^2 + \frac{c_0}{\tau_0}|y|^2 \\
& \geq & {\mathcal A}_{\tau_0}(y_0,x_0) - \frac{c_0}{\tau_0}|y_0|^2 + \langle D_y {\mathcal A}_{\tau_0}(y_0,x_0), y-y_0 \rangle \\
&&-2\frac{c_0}{\tau_0} \langle y_0, y-y_0 \rangle + \frac{c_0}{\tau_0}|y|^2 \\
& = & {\mathcal A}_{\tau_0}(y_0,x_0) - \langle p_0, y-y_0 \rangle + \frac{c_0}{\tau_0}|y-y_0|^2 
\end{eqnarray*}
for any $y$ such that $|y-x_0| \leq \Lambda_0 \tau_0$. Together with inequalities \eqref{subgr}  and \eqref{tzero}, this implies
\begin{eqnarray}
\lefteqn{u_0(y)+{\mathcal A}_{\tau_0}(y,x_0) -u_0(y_0) - {\mathcal A}_{\tau_0}(y_0,x_0)} \nonumber \\
& \geq & \langle p_0,y-y_0 \rangle - \frac K2 |y-y_0|^2 -  \langle p_0,y-y_0 \rangle + \frac{c_0}{\tau_0}|y-y_0|^2 \nonumber \\
\qquad & = & \left( \frac{c_0}{\tau_0} - \frac K2 \right) |y-y_0|^2>0, \label{aconv}
\end{eqnarray}
for all $y \neq y_0$ such that $|y-x_0| \leq \Lambda_0 \tau_0$. 

We now claim that $\xi(\cdot)$ is the unique minimizer in \eqref{valuef} with endpoint $(x,t)=(x_0,\tau_0)$. In fact, let $\zeta:[0,\tau_0] \to \R^n$ be any minimizer for this problem.  By Proposition \ref{reg}, $\zeta$ is a classical characteristic. In particular, by our choice of $\Lambda_0$, it satisfies $|\dot \zeta(t) | \leq \Lambda_0/2$. Then, if we set $z_0=\zeta(0)$ we find that $|x_0-z_0|=|\zeta(\tau_0)-\zeta(0)| \leq \frac 12 \Lambda_0 \tau_0$. On the other hand, the minimality of $\zeta$ implies that $z_0$ is a minimizer for \eqref{valuef2} with $(x,t)=(x_0,\tau_0)$. In view of \eqref{aconv}, we have $z_0=y_0$. Since $\zeta$ and $\xi$ are both solutions of \eqref{ham-syst} with the same endpoints, they coincide by Lemma \ref{action}(i). This shows that $\xi(\cdot)$ is the unique minimizer 
 and therefore a classical characteristic on $[0,\tau_0]$, by Proposition \ref{reg}.

Conversely, let $\xi:[0,t_0] \to \R^n$ be a classical characteristic starting at $y_0$. By Proposition \ref{reg}, $\xi$ is a minimizer in \eqref{valuef} for the point $(t,x)=(t_0,\xi(t_0))$, which implies
$$
u(t_0,\xi(t_0))=u_0(y_0)+\ma_{t_0}(y_0,\xi(t_0)) = \min_{y \in \R^n} [u_0(y) + \ma_{t_0}(y,\xi(t_0))].
$$
From this we see that the function  $y \to u_0(y) + \ma_{t_0}(y,\xi(t_0))$ attains a  minimum at the point $y=y_0$. By our choice of $\Lambda_0$ in \eqref{lambdazero}, we know that $|\dot \xi(t)| \leq \Lambda_0/2$ for all $t$, and therefore $|y_0-\xi(t_0)| \leq \frac 12 \Lambda_0 t_0$.  By the $C^{1,1}$ regularity of $\ma$ given by Lemma \ref{action}(ii), we deduce that the proximal subgradient of $u_0$ at $y_0$ is nonempty and contains the vector
 $$
 p_0 := - D_y \ma_{t_0}(y,\xi(t_0)) = D_q L(0,\xi(0),\dot \xi(0))=p(0),
 $$
 where $p$ is the dual arc associated with $\xi$. Therefore, the pair $(\xi,p)$ solves system \eqref{ham-syst} with initial conditions $y(0)=y_0, p(0)=p_0$, as it was claimed in (ii). We also observe that the value of $p_0$ in (ii) is uniquely determined by the condition $\dot\xi(0)=D_p H(0,y_0,p_0)$ and by the strict convexity of $H$. This proves the theorem in the more restrictive setting described at the beginning of the proof.

Let us now drop the additional assumptions and consider the case of a general open set $\Omega \subset \R^n$ and general $H,u_0$ satisfying only (H) and (U$_0$). Let us fix any $y_0 \in \Omega$, and let $R>0$ be such that $\overline{B_{2R}(y_0)} \subset \Omega$. Denote by $L_0$ the Lipschitz constant of $u$ in $[0,T] \times \overline{B_{2R}(y_0)}$ and set
$$
L_1=\max \{ H(t,x,p) ~:~ (t,x,p) \in [0,T] \times \overline{B_{2R}(y_0)} \times \overline{B_{L_0}(0)} \, \}.
$$
Now, let $\tilde H \in C^2([0,T] \times \R^n \times \R^n)$ be a function satisfying (H1)---(H3) and such that $\tilde H \equiv H$ on the set $[0,T] \times {B_{2R}(y_0)} \times{B_{L_0}(0)}$ and let $\tilde u_0:\R^n \to \R$ be a function which coincides with $u_0$ on $B_{2R}(y_0)$ and is globally Lipschitz with constant $L_0$. We then denote by $\tilde u$ the solution of  \eqref{eq:cp} on $[0,T] \times \R^n$, with $H,u_0$ replaced by $\tilde H, \tilde u_0$. By standard uniqueness results for viscosity solutions in cones of propagation, see e.g. \cite[Theorem V.3]{CL}, we see that $u \equiv \tilde u$ on $[0,T_0] \times B_R(y_0)$ provided $T_0>0$ satisfies $L_1 T_0 \leq R$. Since $\tilde u$ satisfies the hypotheses of the first part of the proof, the assertion follows.
\end{proof}

Let us consider the solution $u$ of Example 1. In that case, we have $D_{pr}^-u_0=[-1,1]$. For any $p_0 \in [-1,1]$, the solution of \eqref{ham-syst} with initial conditions $\xi(0)=0, p(0)=p_0$ is $\xi(t)=p_0 t$. From the explicit form of the solution, we can see directly that $\xi(t)$ is a classical characteristic for all $t>0$. This infinite family of characteristics emanating from $y=0$, which induces the instantaneous regularization of the solution, is called a ``rarefaction wave'' in the context of hyperbolic conservation laws.

\section{A criterion for local regularity}

Let us consider a solution of \eqref{eq:cp} under the general assumptions (H) and (U$_0$). We take a point $y_0 \in \Omega$ and we wish to analyze how the regularity of $u$ near $(0,y_0)$ is influenced by the behaviour of $u_0$ in a neighbourhood of $y_0$.

From Theorem \ref{one-to-one} we know that, if the proximal subdifferential of $u_0$ at $y_0$ is nonempty, then there exists at least a characteristic curve starting at $y_0$ along which the solution is smooth. This suggests that, if the same property holds in a neighbourhood of $y_0$, the solution $u$ should be smooth in a whole forward parabolic neighbourhood of $(0,y_0)$. The next result shows that this is indeed the case, provided subdifferentiability holds in a uniform way, and that the condition is necessary and sufficient.

\begin{theorem}\label{t:semic}
Under assumptions {\rm (H)}, {\rm (U$_0$)}, let $u:[0,T] \times \Omega \to \R$ be a viscosity solution of \eqref{eq:cp}, and let $y_0 \in \Omega$. Then the following properties are equivalent. \\
{\bf (i)}  There exists $K>0$ such that $u_0$ has a proximal $K$-subgradient for all $y$ in a neighbourhood of $y_0$. \\
{\bf (ii)} There exist $R,t_0>0$ such that  $u \in C^{1,1}_{loc} (]0,t _0] \times B_{R}(y_0))$.
      \end{theorem}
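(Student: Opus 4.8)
The plan is to characterize the $C^{1,1}_{loc}$ regularity in (ii) through the uniqueness of the minimizers in the value-function representation \eqref{valuef2}, and to read off this uniqueness from the convexity of $y\mapsto u_0(y)+\ma_t(y,x)$. As in the proof of Theorem \ref{one-to-one}, I would first reduce to the globally nice setting (H1)--(H3), (U$_0^*$) by the finite-speed-of-propagation localization argument, so that Lemma \ref{action} and Propositions \ref{classical}--\ref{reg} apply verbatim, with $\Lambda_0$ as in \eqref{lambdazero}. I will use repeatedly that, by Proposition \ref{classical}, $u$ is differentiable at $(t,x)$ iff the minimizer in \eqref{valuef2} is unique, and that, as recalled in Section 2, an open set free of singular points is exactly a set of $C^{1,1}_{loc}$ regularity. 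Moreover, every minimizing arc is a classical characteristic of speed $\le\Lambda_0/2$, so each minimizer $y$ for the endpoint $x$ satisfies $|y-x|\le\tfrac12\Lambda_0 t$ and lies in the interior of the ball $B_{\Lambda_0 t}(x)$ where Lemma \ref{action} is in force.

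For (i)$\Rightarrow$(ii): condition (i) says $u_0$ is semiconvex near $y_0$, i.e.\ $u_0+\tfrac K2|\cdot|^2$ is convex on some $B_\rho(y_0)$. I would pick $R,t_0>0$ small enough that $t_0$ is below the threshold of Lemma \ref{action}, that $\tfrac{c_0}{t_0}>\tfrac K2$, and that $B_{\Lambda_0 t}(x)\subset B_\rho(y_0)$ for all $t\in\,]0,t_0]$, $x\in B_R(y_0)$. On $B_{\Lambda_0 t}(x)$ I decompose
$$u_0(y)+\ma_t(y,x)=\Big[u_0(y)+\tfrac K2|y|^2\Big]+\Big[\ma_t(y,x)-\tfrac{c_0}{t}|y|^2\Big]+\Big(\tfrac{c_0}{t}-\tfrac K2\Big)|y|^2,$$
where the first bracket is convex by (i), the second by Lemma \ref{action}(iii), and the last is strictly convex because $\tfrac{c_0}{t}>\tfrac K2$. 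Hence $y\mapsto u_0(y)+\ma_t(y,x)$ is strongly convex on $B_{\Lambda_0 t}(x)$, and since all its global minimizers lie in the interior of this convex ball, strong convexity forces the minimizer to be unique. By Proposition \ref{classical}, $u$ is then differentiable at every $(t,x)\in\,]0,t_0]\times B_R(y_0)$, so this parabolic neighbourhood is singularity-free and $u\in C^{1,1}_{loc}$ there.

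For (ii)$\Rightarrow$(i): fix a small $t_1\in\,]0,t_0]$. Since $u$ is differentiable at each $(t_1,x)$ with $x\in B_R(y_0)$, the minimizer $y(t_1,x)$ is unique and equals the initial point $\xi(0)$ of the characteristic through $(t_1,x)$; solving \eqref{ham-syst} backward from $(x,D_xu(t_1,x))$ and using that $x\mapsto D_xu(t_1,x)$ is Lipschitz shows $x\mapsto y(t_1,x)$ is continuous with $|y(t_1,x)-x|\le\tfrac12\Lambda_0 t_1$. A degree/Brouwer argument then yields that its image covers a ball $B_{R'}(y_0)$ with $R'=R-\tfrac12\Lambda_0 t_1>0$. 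For $y$ in this image, choose $x$ with $y=y(t_1,x)$; minimality in \eqref{valuef2} gives $u_0(y')+\ma_{t_1}(y',x)\ge u_0(y)+\ma_{t_1}(y,x)$ for all $y'$, whence
$$u_0(y')\ge u_0(y)+\big\langle -D_y\ma_{t_1}(y,x),\,y'-y\big\rangle-\tfrac K2|y'-y|^2$$
for $y'$ near $y$, the quadratic remainder coming from the $C^{1,1}$ bound on $y'\mapsto\ma_{t_1}(y',x)$ in Lemma \ref{action}(ii). Crucially, that bound depends only on $\Lambda_0,t_1$, so $K$ is uniform in $x$, hence in $y\in B_{R'}(y_0)$, giving exactly the uniform proximal $K$-subgradient of (i).

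The localization and the convexity bookkeeping are routine. I expect the genuine obstacle to lie in (ii)$\Rightarrow$(i): first, ensuring the uniform constant $K$ survives, which is precisely why one fixes a single positive $t_1$ (letting $t\to0$ would blow up the $C^{1,1}$ norm of $\ma_t$) and leans on the $x$-independence in Lemma \ref{action}(ii); and second, making the covering argument rigorous, so that a proximal subgradient is obtained at \emph{every} point of a full neighbourhood of $y_0$ rather than merely along one characteristic.
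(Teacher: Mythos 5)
Your proof is correct, but both directions take a genuinely different route from the paper's. For (i)$\Rightarrow$(ii), the paper does not argue via uniqueness of minimizers at all: it notes that uniform $K$-subdifferentiability is semiconvexity of $u_0$, invokes Proposition 4.10 of \cite{B} to propagate semiconvexity to $u(t,\cdot)$ for small $t>0$, combines this with the semiconcavity of viscosity solutions to get $u(t,\cdot)\in C^{1,1}$, and then cites \cite[Corollary 7.3.5]{CS} for joint space-time regularity. Your alternative --- the three-term decomposition making $y\mapsto u_0(y)+\ma_t(y,x)$ strongly convex on $B_{\Lambda_0 t}(x)$ via Lemma \ref{action}(iii), hence uniqueness of minimizers, hence differentiability by Proposition \ref{classical}, hence $C^{1,1}_{loc}$ by the singular-support characterization recalled in Section 2 --- is essentially the uniform version of the computation \eqref{aconv} in the proof of Theorem \ref{one-to-one}, and it buys self-containedness at the cost of the bookkeeping you carried out (minimizers confined to $B_{\Lambda_0 t/2}(x)$, $t_0<2c_0/K$, $B_{\Lambda_0 t}(x)\subset B_\rho(y_0)$). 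For (ii)$\Rightarrow$(i), the paper avoids your backward-shooting-plus-degree step entirely by shooting \emph{forward}: for each $y\in B_{R/2}(y_0)$, Remark \ref{initial} provides a generalized characteristic $\gamma_y$ with $\gamma_y(0)=y$; hypothesis (ii) makes its points regular, so $\gamma_y$ is classical, and Proposition \ref{reg} then exhibits $y$ itself as a minimizer for the endpoint $(\tau_0,\gamma_y(\tau_0))$, after which Lemma \ref{action}(ii) yields the proximal $K$-subgradient at $y$ with $K$ depending only on $\Lambda_0,\tau_0$. This reaches \emph{every} point of $B_{R/2}(y_0)$ directly --- the covering difficulty you correctly identified as the main obstacle simply never arises. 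Your Brouwer-degree argument is valid (continuity of $x\mapsto y(t_1,x)$ follows from backward integration of \eqref{ham-syst} with $D_xu(t_1,\cdot)$ Lipschitz, and $|f(x)-x|\le \Lambda_0 t_1/2$ gives surjectivity onto $B_{R'}(y_0)$ by a standard homotopy), and your insistence on fixing a single $t_1>0$ to keep the $C^{1,1}$ norm of $\ma_{t_1}$ uniform matches the paper's use of a fixed $\tau_0$; but the forward construction is both more elementary and the paper's intended mechanism.
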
 
\begin{proof} 
We first prove that (i) implies (ii). It is sufficient to consider the case where $\Omega=\R^n$, assumptions (H1)---(H3) and (U$_0^*$) hold, 
 and $u_0$ has a $K$-subdifferential everywhere, since the general case can be reduced to this one as in the last step of the proof of Theorem \ref{one-to-one}.

It is easy to see that the $K$-subdifferentiability at every point of $u_0$ implies the semiconvexity of $u_0$ with constant $K$ (in fact, the two properties are equivalent). Then, it is known, see Proposition 4.10 in \cite{B}, that $u(t,\cdot)$ is also semiconvex for $t>0$ small enough. On the other hand, our solution is locally semiconcave on $]0,T] \times \Omega$. This shows that $u(t,\cdot)$ is both semiconcave and semiconvex, hence $C^{1,1}$, for small $t$. Solutions of \eqref{eq:cp} which are regular in space are also jointly regular in space and time, see the argument of Corollary 7.3.5 in \cite{CS}.

Let us now prove the converse implication. Again, it is not restrictive to assume properties (H1)---(H3) and (U$_0^*$).
 Let $L_0$ be the Lipschitz constant of $u$ in $[0,t_0] \times B_R(y_0)$,  set  
$$
\Lambda_0 = \sup \{ D_p H (t, x, p)  :  (t,x,p) \in [0,t_0] \times B_R(y_0)\times B_{L_0}(0) \},
$$
and  set also
$$ \tau_0=\min\left\{ \frac{R}{2 \Lambda_0} , t_0\right\},$$
where $t_0$ is the constant associated to $\Lambda_0$ in Lemma~\ref{action}. Let us now fix any $y\in B_{R/2}(y_0)$. By Remark \ref{initial}, there exists a generalized characteristic $\gamma_y (\cdot)$ such that  $\gamma_y (0)=y$. By our choice of $\Lambda_0, \tau_0$, we have that $|y-\gamma (t)|\le \Lambda_{0} t \leq R/2$ for all $t \in [0, \tau_0]$. Because of assumption (ii), the points $(t,\gamma_y(t))$ are regular for $t \in ]0,\tau_0]$ and so $\gamma_y(\cdot)$ is a classical characteristic. By Proposition \ref{reg}, we have
%
          $$   u(\tau_0,\gamma_y(\tau_0))=u_0(y)+\ma_{\tau_0}(y,\gamma_y (\tau_0))=
\min_{z\in \R^n} [u_0(z)+\ma_{\tau_0}(z,\gamma_y (\tau_0))].
         $$
By the $C^{1,1}$ regularity of $\ma$ given in Lemma~\ref{action}(ii), we deduce that $-D_y\ma_{\tau_0}(y,\gamma_y(\tau_0))$ is a $K$-proximal subgradient for $u_0$ at $y$, with $K$ only depending on $\Lambda_0,\tau_0$. Since these constants do not depend on $y \in B_{R/2}(y_0)$, the conclusion follows.
\end{proof}

Observe that an initial datum $u_0$ may well satisfy property (i) of the above statement without being differentiable at $y_0$, see Example 1. In such cases, the solution exhibits instantaneous smoothing for small times.

We can give a counterpart to the previous theorem by showing that, if condition (i) does not hold, then we have weak generation of singularities from $y_0$ in the following sense: there exists a generalized characteristic $\gamma (\cdot )$ such that $(t,\gamma (t))\in \overline{\Sigma (u)}$ for every $t$ in the domain of definition of $\gamma$. In particular, $u$ may be differentiable along $\gamma$ (that is, $\gamma$ may be a classical characteristic) but it is not $C^1$ in any neighbourhood of $\gamma$.

\begin{theorem} \label{weak}
 Under assumptions {\rm (H)}, {\rm (U$_0$)}, let $u:[0,T] \times \Omega \to \R$ be a solution of \eqref{eq:cp}, and let $y_0 \in \Omega$. Then, the following assertions are equivalent. 
\\
{\bf (i)} For every $V$ neighbourhood of $y_0$, and for every $K>0$ there exists $y\in V$ such that $u_0$ has no proximal $K$-subgradient at $y$. \\
{\bf (ii)} There exists a generalized characteristic $\gamma$, defined on $[0,\sigma [$ (with either $\sigma =T$ or $\gamma (\sigma )\in \partial \Omega$), such that $\gamma (0)=y_0$ and $(t,\gamma (t))\in \overline{\Sigma (u)}$, for every $t\in [0,\sigma [$. 
\end{theorem}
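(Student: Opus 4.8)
The plan is to read Theorem \ref{weak} as the logical negation of Theorem \ref{t:semic} and to convert the resulting failure of local $C^{1,1}$ regularity into a weakly singular characteristic by combining a propagation result with a compactness argument. Note first that assertion (i) here is exactly the negation of assertion (i) of Theorem \ref{t:semic}: the latter says $\exists K>0\ \exists V\ \forall y\in V$ the datum $u_0$ has a proximal $K$-subgradient at $y$, whose negation is $\forall K>0\ \forall V\ \exists y\in V$ with no proximal $K$-subgradient. By the equivalence established in Theorem \ref{t:semic}, (i) here is therefore equivalent to the failure of the regularity property, namely $u\notin C^{1,1}_{loc}(\,]0,t_0]\times B_R(y_0))$ for every $R,t_0>0$. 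Recalling that $u$ is $C^{1,1}_{loc}$ precisely off $\overline{\Sigma(u)}$, this failure is in turn equivalent to the statement that $\overline{\Sigma(u)}$ meets every forward parabolic neighbourhood of $(0,y_0)$.

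The implication (ii)$\Rightarrow$(i) is then immediate. If $\gamma$ is as in (ii), then $(t,\gamma(t))\in\overline{\Sigma(u)}$ for $t>0$ and $(t,\gamma(t))\to(0,y_0)$ as $t\downarrow 0$ by continuity of $\gamma$; hence $\overline{\Sigma(u)}$ accumulates at $(0,y_0)$, so by the previous paragraph property (ii) of Theorem \ref{t:semic} fails, and this is exactly (i).

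For (i)$\Rightarrow$(ii) I would argue constructively. By the equivalences above, (i) yields a sequence of singular points $(t_k,x_k)\in\Sigma(u)$ with $t_k\downarrow 0$ and $x_k\to y_0$ (since $\Sigma(u)$ is dense in $\overline{\Sigma(u)}$ and the latter meets every such neighbourhood). The core step is to produce through each $(t_k,x_k)$ a generalized characteristic $\gamma_k\colon[t_k,\sigma_k[\,\to\Omega$ that stays in the closed set $\overline{\Sigma(u)}$ and is defined until it approaches $\partial\Omega$ or reaches time $T$. I would get this from a quantitative form of the local propagation theorem of \cite{AC,CY,Y}: on each compact $Q\Subset\,]0,T[\times\Omega$ there is $\tau_Q>0$ such that through every point of $\Sigma(u)\cap Q$ passes a generalized characteristic staying singular for time $\tau_Q$. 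Because $\tau_Q$ is uniform and $\overline{\Sigma(u)}$ is closed, a limiting argument extends propagation to points of $\overline{\Sigma(u)}\cap Q$, and a maximal-continuation (Zorn) argument then pushes the curve forward inside $\overline{\Sigma(u)}$ until it leaves every compact set, i.e. until it nears $\partial\Omega$ or $t=T$. I would then pass to the limit $k\to\infty$: the $\gamma_k$ share the Lipschitz bound $\sup|D_pH|$ over the relevant region with momenta bounded by $\mathrm{Lip}(u)$, so by Arzel\`a--Ascoli a subsequence converges locally uniformly to a Lipschitz curve $\gamma$ with $\gamma(0)=y_0$, defined on $[0,\sigma'[$ with $\sigma'>0$ bounded below via $\mathrm{dist}(y_0,\partial\Omega)$ and the speed bound, exactly as in Remark \ref{initial}. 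Upper semicontinuity of $(t,x)\mapsto\mathrm{co}\,D_pH(t,x,D_x^+u(t,x))$ makes the inclusion \eqref{ggc} stable under uniform convergence (the value at the single instant $t=0$ being irrelevant, as \eqref{ggc} is required only a.e.), so $\gamma$ is a generalized characteristic, and closedness of $\overline{\Sigma(u)}$ gives $(t,\gamma(t))\in\overline{\Sigma(u)}$; a final application of the core step at a positive-time point of $\gamma$ lets me take $\gamma$ defined up to $\partial\Omega$ or $T$, which is (ii).

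I expect the main obstacle to be precisely the core step, that is, keeping a characteristic inside $\overline{\Sigma(u)}$ all the way to the boundary or to time $T$. The two delicate points are the uniformity on compact sets of the propagation time $\tau_Q$, which is what allows the singular-set property to survive both the passage from $\Sigma(u)$ to its closure and the degeneration $t_k\downarrow 0$, and the verification that the differential inclusion \eqref{ggc} is preserved in the limit down to the initial time, where the semiconcavity underlying the whole propagation machinery breaks down.
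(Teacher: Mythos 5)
Your proposal is correct and follows essentially the same route as the paper: both directions hinge on Theorem \ref{t:semic}, with (i) yielding singular points $(t_h,y_h)\to(0,y_0)$, a weakly singular generalized characteristic through each, and a compactness limit giving $\gamma$. The ``core step'' you single out as the main obstacle --- propagating a characteristic inside $\overline{\Sigma(u)}$ all the way to $\partial\Omega$ or $t=T$ --- is exactly Theorem 1.1 of \cite{A1}, which the paper simply cites rather than re-deriving via your uniform-$\tau_Q$ and continuation argument.
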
 
\begin{proof}
 Assume (i). By Theorem~\ref{t:semic}, for every $t_0>0$ and $R>0$ the solution $u$ of \eqref{eq:cp} is not $C^{1,1}$ in $]0,t_0[\times B_R(y_0)$, and therefore the intersection $( \, ]0,t_0[\times B_R(y_0))\cap \Sigma (u)$ is nonempty.

This implies that we can find a sequence $(t_h,y_h)\in\Sigma (u)$, with $t_h>0$ and $y_h\in \Omega$, converging to $(0,y_0)$.           
By Theorem 1.1 in \cite{A1}, for every $y_h$ there exists a generalized characteristic $\gamma_h$, with $\gamma_h (t_h)=y_h$, such that
$(t,\gamma_h (t))\in \overline{\Sigma (u)}$ for every $t\in [t_h, \sigma_h [$, with either $\sigma_h=T$ or $\gamma_h(\sigma_h)\in \partial\Omega$. By compactness of the generalized characteristics, we can take a limit $\gamma= \lim_{h\to\infty}\gamma_h$ which satisfies (ii) above.

If we assume instead that (i) does not hold, then Theorem~\ref{t:semic} implies that $u$ is $C^{1,1}_{loc}(\, ]0,t_0[\times B_R(y_0))$ for some $t_0,R>0$, and this shows that (ii) cannot hold.    
\end{proof} 

In the light of our results, it is interesting to consider cases where the initial data has nonempty proximal subdifferential at every point, but the subdifferentiability does not hold with a uniform $K$ around a certain point $y_0$. In fact, on the one hand, Theorem \ref{one-to-one} gives the existence of a classical characteristic starting at $y_0$. On the other hand, Theorem \ref{weak} implies the generation of a weakly singular characteristic from the same point. We give two examples where this mixed behaviour can be recovered explicitly.

\begin{example} \label{e3}{\rm
Consider equation \eqref{eq:1+1} with $u_0(y)=y^2 \sin \frac 1y$ and $u_0(0)=0$. For $y \neq 0$, such a function is smooth and satisfies
$$
u_0'(y)= 2y \sin \frac 1y - \cos \frac 1y, \quad u_0''(y)= \left( 2 +\frac{1}{y^2} \right)\sin \frac 1y - \frac 2y \cos \frac 1y.
$$
The function is differentiable also at $y=0$ with $u_0'(0)=0$, but $u_0'$ is not continuous at $y=0$. From the definition, we see that $p_0=0$ is a proximal $K$-subgradient at $y=0$, with $K=2$. By $C^2$-regularity for $y \neq 0$, we deduce that $D_{pr}^- u_0(y)$ is nonempty for every $y \in \R$. However, since $\liminf_{y \to 0} u_0''(u) = -\infty$, we see that proximal subdifferentiability does not hold with a uniform $K$ for $y$ near $0$. By the Hopf formula we find, for $t>0$,
$$
u(0,t)= \min_{y\in \R} \left[ \sin \frac 1y + \frac{1}{2t} \right] y^2.
$$
For $t < 1/2$ the expression inside brackets is positive for every $y$, hence the unique minimizer is $y=0$. It follows that $u(0,t) \equiv 0$ for $t \in [0,1/2]$ and that $\gamma(t) \equiv 0$ is a classical characteristic, in accordance with Theorem \ref{one-to-one}. 

On the other hand, if we try to apply the classical method of characteristics, and consider the map
$$
X(y,t):=y+D_pH(u_0'(y))t=  y+u_0'(y)t
$$
we see that, since $u_0''(y)$ is unbounded both from above and below near $y=0$, the map $y \to X(y,t)$ is not monotone near $y=0$, no matter how small $t$ is. Therefore, the problem does not have a $C^1$ solution on $(0,\tau] \times [-\rho,\rho]$ for arbitrarily small $\tau,\rho>0$, as predicted by Theorem \ref{t:semic}. We also notice that the classical characteristic $\gamma$ defined above is entirely contained in the closure of the singular set of $u$, in accordance with Theorem \ref{weak}.

As a side remark, we observe that the solution of this example only exists for $t \in [0,1/2]$, because of the quadratic growth of the initial datum. We could recover the same local behaviour around $(0,0)$ by choosing a globally Lipschitz initial value which coincides with $u_0$  in a neighbourhood of $0$, and in this case the solution would be defined for all times.
}\end{example}

\begin{example} \label{e4}{\rm 
Consider equation \eqref{eq:1+1}
with initial value
$$ u_0(y)=
 \begin{cases}
        -y,\quad &\text{ for }y\leq 0,
        \\
        -|y|^{3/2}\quad &\text{ for } y \geq 0  .
        \end{cases}
$$
Such a function has nonempty proximal subdifferential everywhere: for $y \neq 0$ the proximal subdifferential coincides with the classical derivative, while at the origin we have $D_{pr}^- u_0(y_0) = [-1,0[$. However, since $u_0''(y) \to -\infty$ as $y \to 0^+$, the proximal subdifferentiability does not hold with a uniform $K$.

For any $p_0 \in [-1,0[$, we know from Theorem \ref{one-to-one} that there is a classical characteristic starting from zero of the form
 $\xi_{p_0}(t)=p_0 t$. However, by approximation, we see that there is also a generalized characteristic starting with speed $\dot \xi(0)=0$. Such a characteristic cannot be classical, because $p_0=0$ is not a proximal subgradient. In this case, the point $y=0$ generates a fan of classical characteristics plus a singular generalized one. It can be checked that each characteristic of the form $\xi_{p_0}$ remains classical until it intersects the singular one at a time $t^*(p_0)$, where $t^*(p_0)$ is a decreasing function of $p_0 \in [-1,0[$, satisfying $t^*(p_0) \to 0$
as $p_0 \to 0$.
}\end{example}

\section{Generation of singularities from the initial data}

We now consider the generation from the initial data of singular generalized characteristics, i.e. characteristics which are contained in the singular set $\Sigma$ and not just in its closure. Our result applies to a more specific class of Hamilton-Jacobi equation, with a quadratic Hamiltonian with respect to $p$. More precisely, we make the following structural assumption throughout the section. \medskip

 {\bf (H$^*$)} The Hamiltonian has the form 
 \begin{equation}\label{eq:h}
H(t,x,p)=\frac 12 \langle A(t,x)p,p\rangle +V(t,x),\qquad (t,x)\in [0,T]\times \Omega ,
  \end{equation}
 where  $V:[0,T]\times \Omega \longrightarrow \R$ is a function of class $C^2$ and
 $A(t,x)$ is a family of positive definite matrices with $C^2$ coefficients such that, as quadratic forms,
    $$
c_1 I< A(t,x)<c_2 I , 
    $$
for every $(t,x)\in [0,T]\times \Omega $, for suitable positive constants $c_1,c_2$.
\medskip

For a Hamiltonian as in \eqref{eq:h},  the equation satisfied by a generalized characteristic \eqref{ggc}
takes the simplified form
$$
\dot \gamma(t) \in A(t,\gamma(t)) D_x^+u(t,\gamma(t)), \qquad \mbox{for a.e. }t \in [t_0,t_1].
$$
In this setting, the following result about global propagation of singularities is available.

\begin{theorem}\label{known}
For any $(t_0,x_0) \in  ]0,T]\times \Omega$ there exists a unique generalized characteristic $\gamma:[t_0,\sigma[ \to \Omega$, where either  ${\rm dist}(\gamma(t),\partial \Omega) \to 0$ as $t \to \sigma$ or $\sigma =T$.

In addition, if  $(t_1,\gamma(t_1))$ is singular for $u$ for some $t_1 \geq t_0$, then $(t,\gamma(t))$ is also singular, for all $t \in [t_1,\sigma[$.
\end{theorem}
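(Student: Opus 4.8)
\emph{Reduction and existence.} The plan is to prove the three assertions---existence, uniqueness, and forward invariance of the singular set along the curve---separately. First I would localize exactly as in the last step of the proof of Theorem \ref{one-to-one}, replacing $A,V,u_0$ by globally controlled data that agree with the original ones near $x_0$; this lets me assume $\Omega=\R^n$, that $u(t,\cdot)$ is semiconcave with a locally uniform constant for $t$ in compact subsets of $]0,T[$, and that the minimizers of Proposition \ref{classical} stay in a fixed compact set. Under \eqref{eq:h} the inclusion \eqref{ggc} reduces to $\dot\gamma(t)\in A(t,\gamma(t))D^+_xu(t,\gamma(t))$, and the existence of at least one such Lipschitz curve from $(t_0,x_0)$ is given by Theorem \ref{gen-char}.

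\emph{The canonical selection.} To single out one curve I would use the minimal--energy selection. Since $A(t,x)$ is positive definite and $D^+_xu(t,x)$ is nonempty, compact and convex, the strictly convex map $p\mapsto\tfrac12\langle A(t,x)p,p\rangle$ has a unique minimizer $p^{0}(t,x)\in D^+_xu(t,x)$, characterized by the variational inequality $\langle A(t,x)p^{0},q-p^{0}\rangle\ge0$ for all $q\in D^+_xu(t,x)$. I would take $\gamma$ to solve $\dot\gamma(t)=A(t,\gamma(t))p^{0}(t,\gamma(t))$ and verify, via upper semicontinuity of the superdifferential, that its velocity is right--continuous.

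\emph{Uniqueness.} Given two such characteristics $\gamma_1,\gamma_2$ from the same point, I would differentiate $\tfrac12|\gamma_1-\gamma_2|^2$, with $p_i=p^{0}(t,\gamma_i)$ and $A_i=A(t,\gamma_i)$, and split $A_1p_1-A_2p_2=A_1(p_1-p_2)+(A_1-A_2)p_2$. The Lipschitz continuity of $A$ and the boundedness of $p_2$ make $\langle\gamma_1-\gamma_2,(A_1-A_2)p_2\rangle$ of order $|\gamma_1-\gamma_2|^2$, while the semiconcavity estimate $\langle p_1-p_2,\gamma_1-\gamma_2\rangle\le C|\gamma_1-\gamma_2|^2$ handles the diagonal part. \textbf{The main obstacle lies here:} the residual term $\langle(A_1-I)(\gamma_1-\gamma_2),p_1-p_2\rangle$ is only linear in $|\gamma_1-\gamma_2|$ under crude bounds, since $|p_1-p_2|$ need not be small across the singular set, so the $x$--dependence of $A$ blocks a naive Gronwall argument. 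I would neutralize it by measuring the discrepancy in the $A^{-1}$--weighted metric intrinsic to \eqref{eq:h} and by invoking the minimality inequality above, so that the offending term is absorbed and Gronwall forces $\gamma_1\equiv\gamma_2$.

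\emph{Propagation.} Finally, suppose $(t_1,\gamma(t_1))$ is singular and, for contradiction, that $\gamma$ is regular on a right neighbourhood $]t_1,t_1+\delta[$. There $\gamma$ is a classical characteristic, hence a minimizer by Proposition \ref{reg}, so $\frac{d}{dt}u(t,\gamma(t))=L(t,\gamma(t),\dot\gamma(t))$; letting $t\downarrow t_1$ and writing $v=\dot\gamma(t_1^+)$, the right derivative at $t_1$ equals $L(t_1,\gamma(t_1),v)=\max_{p}[\langle p,v\rangle-H(t_1,\gamma(t_1),p)]$. On the other hand, for every element $(q_t,q)$ of the space--time superdifferential $D^+u(t_1,\gamma(t_1))$, semiconcavity gives that this right derivative is $\le q_t+\langle q,v\rangle$; minimizing over the compact convex set $D^+u$, where the minimum is attained at an extreme point, i.e. a reachable gradient with $q_t=-H(t_1,\gamma(t_1),q)$, bounds it by $\min_{q}[\langle q,v\rangle-H(t_1,\gamma(t_1),q)]$. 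Comparing with the maximum above and using the strict convexity of $H$, the two values agree only if every reachable gradient equals the unique Legendre maximizer, i.e. only if $D^+_xu(t_1,\gamma(t_1))$ is a singleton---contradicting singularity. Thus the singular set along $\gamma$ is invariant to the right; since upper semicontinuity of $q\mapsto D^+_xu$ shows that being singular along $\gamma$ is a closed condition, a continuation argument extends it to all of $[t_1,\sigma[$. I expect the uniqueness step---specifically the neutralization of the $x$--dependence of $A$---to be the decisive technical point, this being the core of the result quoted from \cite{A2}.
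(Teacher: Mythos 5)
Your existence step and your uniqueness step essentially reproduce the paper's route: the paper gets existence from Theorem \ref{gen-char} and uniqueness by a Gronwall argument based on the monotonicity of $D^+_xu$ coming from semiconcavity (quoting Lemma 1 of \cite{ACNS}), and your $A^{-1}$-weighted metric is exactly the right implementation of that argument, since for symmetric $A_1$ one has $\langle A_1^{-1}(\gamma_1-\gamma_2),A_1(p_1-p_2)\rangle=\langle \gamma_1-\gamma_2,p_1-p_2\rangle\le C|\gamma_1-\gamma_2|^2$, and all remaining terms are quadratic by the Lipschitz continuity of $A$ and the boundedness of the $p_i$. Two corrections, though: the minimal-energy selection is both unnecessary and, as written, insufficient --- the theorem asserts uniqueness among \emph{all} Lipschitz solutions of the inclusion \eqref{ggc}, while by fixing $p_i=p^0(t,\gamma_i)$ you prove uniqueness only within the class of minimal-selection curves. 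The same weighted Gronwall computation works verbatim for arbitrary measurable selections $p_i(t)\in D^+_xu(t,\gamma_i(t))$, and the variational inequality characterizing $p^0$ plays no role in absorbing the offending term; drop the selection entirely.

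The genuine gap is in the propagation step. Your local argument (comparing $L(t_1,x_1,v)=\max_p[\langle p,v\rangle-H(t_1,x_1,p)]$ with the bound $q_t+\langle q,v\rangle$ over reachable gradients, and using strict convexity of $H$ in $p$) is sound as far as it goes, but it only excludes that a singular point is followed by a whole \emph{interval} of differentiability along $\gamma$; the theorem asserts singularity at every later time. Your bridge --- that ``being singular along $\gamma$ is a closed condition'' by upper semicontinuity of $D^+_xu$ --- is false: upper semicontinuity does not prevent a sequence of non-singleton superdifferentials from collapsing to a singleton in the limit, i.e.\ $\Sigma(u)$ is not closed. The paper itself trades on precisely this distinction between $\Sigma(u)$ and $\overline{\Sigma(u)}$ (Theorem \ref{weak}, and Example \ref{e3}, where a classical characteristic consists of regular points lying in $\overline{\Sigma(u)}$). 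Without closedness, the continuation argument collapses: the set of singular times along $\gamma$ could a priori be non-closed with regular times accumulating on it, and the local step cannot be iterated across such a set. This is exactly the difficulty that the \emph{global} propagation theorem of \cite{A2} resolves, and the paper's proof is correspondingly short: \cite{A2} provides, from any singular point $(t_1,\gamma(t_1))$, a generalized characteristic consisting entirely of singular points on $[t_1,\sigma[$, and forward uniqueness (the Gronwall step) forces that curve to coincide with $\gamma$. Note also that you misattribute the citation: \cite{A2} is invoked in the paper for singular propagation, not for the uniqueness step.
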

\begin{proof} The existence of the generalized characteristic is the content of Theorem \ref{gen-char}. Uniqueness follows from the monotonicity of $D^+_xu$, which is a consequence of semiconcavity, by a standard application of Gronwall's lemma, as in Lemma 1 of \cite{ACNS}.

The result of \cite{A2} shows that any singular point is the starting point of a generalized characteristic consisting of singular points. By the forward uniqueness of generalized characteristics, this implies the last part of the statement.
\end{proof}

We can now prove our propagation result

\begin{theorem}\label{generaz} 
Let $u: [0,T[ \times \Omega \to \R$ be a viscosity solution of \eqref{eq:cp}, under assumptions {\rm (H$^*$)}  and {\rm (U$_0$)}. Let $y_0 \in \Omega$ be such that the proximal subgradient $D_{pr}^- u_0 (y_0)$ is empty. Then there exists a generalized characteristic $\gamma:[0,\sigma[ \to \Omega$ starting from $(0,y_0)$ such that $u$ is not differentiable at $(t,\gamma(t))$, for all $t \in ]0,\sigma [$.
\end{theorem}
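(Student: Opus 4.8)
The plan is to produce one generalized characteristic issuing from $(0,y_0)$ and then to exploit the global forward propagation of Theorem \ref{known} to turn weak singularity into strong singularity arbitrarily close to the initial time. After the same localization as in the last paragraph of the proof of Theorem \ref{one-to-one} (restrict to a ball $\overline{B_{2R}(y_0)}\subset\Omega$ and modify $H,u_0$ outside it so that (H1)--(H3) and (U$_0^*$) hold, which is possible since (H$^*$) already provides the bounds $c_1 I<A<c_2 I$), I may assume the global setting of Section 2, so that \eqref{valuef}, Lemma \ref{action} and Propositions \ref{classical}--\ref{reg} are available. By Remark \ref{initial} there is a generalized characteristic $\gamma:[0,\sigma[\to\Omega$ with $\gamma(0)=y_0$, and by Theorem \ref{known}, for every $t_*\in\,]0,\sigma[$ the restriction $\gamma|_{[t_*,\sigma[}$ is the \emph{unique} generalized characteristic issuing from $(t_*,\gamma(t_*))$. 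I then set
$$ t^\dagger=\inf\{\,t\in\,]0,\sigma[\ :\ (t,\gamma(t))\in\Sigma(u)\,\}, $$
with the convention $t^\dagger=\sigma$ when the set is empty, and I argue that $t^\dagger=0$ and that this forces $(t,\gamma(t))\in\Sigma(u)$ for all $t\in\,]0,\sigma[$.

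First I treat the case $t^\dagger=0$. Then there is a sequence $t_k\downarrow 0$ with $(t_k,\gamma(t_k))\in\Sigma(u)$. Since $\gamma|_{[t_k,\sigma[}$ is the unique generalized characteristic emanating from the singular point $(t_k,\gamma(t_k))$, the propagation assertion of Theorem \ref{known} yields $(t,\gamma(t))\in\Sigma(u)$ for every $t\in[t_k,\sigma[$. Taking the union of these intervals as $k\to\infty$ gives $(t,\gamma(t))\in\Sigma(u)$ for all $t\in\,]0,\sigma[$, which is exactly the conclusion.

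It thus remains to exclude the possibility $t^\dagger>0$ (this includes the case where $\gamma$ meets no singular point). In that situation $(t,\gamma(t))$ is a regular point for every $t\in\,]0,t^\dagger[$, so $D^+_xu(t,\gamma(t))=\{D_xu(t,\gamma(t))\}$ and, because $H$ is quadratic, \eqref{ggc} collapses to $\dot\gamma(t)=D_pH(t,\gamma(t),D_xu(t,\gamma(t)))$ for a.e.\ $t\in\,]0,t^\dagger[$. I fix a small $t_*\in\,]0,t^\dagger[$ and reprise the computation in the proof of Proposition \ref{reg}: at points of differentiability one has $\frac{d}{dt}u(t,\gamma(t))=L(t,\gamma(t),\dot\gamma(t))$, and since $t\mapsto u(t,\gamma(t))$ is Lipschitz, integrating from $\delta$ to $t_*$ and letting $\delta\to0^+$, using $u(\delta,\gamma(\delta))\to u_0(y_0)$, gives
$$ u(t_*,\gamma(t_*))=u_0(y_0)+\int_0^{t_*}L(t,\gamma(t),\dot\gamma(t))\,dt. $$
This shows that $\gamma|_{[0,t_*]}$ is a minimizer in \eqref{valuef} for the endpoint $(t_*,\gamma(t_*))$; by Propositions \ref{classical} and \ref{reg} it is then a $C^1$ classical characteristic starting at $y_0$, so Theorem \ref{one-to-one}(ii) furnishes an element of $D_{pr}^-u_0(y_0)$, contradicting the hypothesis $D_{pr}^-u_0(y_0)=\emptyset$. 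Hence $t^\dagger>0$ is impossible, so $t^\dagger=0$ and the proof concludes via the previous paragraph.

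I expect the genuine obstacle to be the upgrade from weak to strong singularity: a bare limit of singular points only lands in $\overline{\Sigma(u)}$ (as in Theorem \ref{weak}), so the strong conclusion really requires invoking the forward propagation of Theorem \ref{known} from times arbitrarily near $0$, organized through the dichotomy on $t^\dagger$. The most delicate technical step is the regular case, where one must verify that a generalized characteristic staying regular on $]0,t^\dagger[$ is in fact a minimizer of the action functional (via the limiting argument $\delta\to0^+$ and the continuity of $u$ up to the initial time), which is what allows Theorem \ref{one-to-one} to be brought in to derive the contradiction.
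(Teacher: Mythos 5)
Your proposal is correct and takes essentially the same route as the paper: existence of $\gamma$ via Remark \ref{initial}, then a contradiction obtained by combining the forward uniqueness and propagation of Theorem \ref{known} with the correspondence of Theorem \ref{one-to-one}(ii). Your dichotomy on the first singular time $t^\dagger$ is merely a reorganization of the paper's direct contradiction (differentiability at some $(\bar t,\gamma(\bar t))$ forces, by Theorem \ref{known}, regularity on all of $]0,\bar t]$, so that $\gamma$ is classical there and Theorem \ref{one-to-one} yields a proximal subgradient at $y_0$), while your action-functional computation in the case $t^\dagger>0$ usefully fills in the one step the paper asserts without detail, namely that a generalized characteristic passing only through regular points is a classical characteristic.
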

\begin{proof}
The existence of a generalized characteristic follows from Remark \ref{initial}. We claim that all points $(t,\gamma(t))$ are singular. Suppose in fact that $u$ is differentiable at $(\bar t,\gamma(\bar t))$ for some $\bar t>0$. By Theorem \ref{known}, $u$ is also differentiable at $(t,\gamma(t))$ for all $t\in ]0,\bar t]$. Hence, the restriction of $\gamma$ to the interval $[0,\bar t]$ is a classical characteristic. But then Theorem \ref{one-to-one} would imply that $D^-_{pr}u_0(y_0)$ is nonempty, in contradiction with our assumptions.
\end{proof}

The solution $u$ of Example 2 exhibits the behaviour predicted by the above theorem. In fact, the proximal subdifferential of $u_0$ at $y=0$ is empty, and the straight line $(t,\gamma(t))$ with $\gamma(t) \equiv 0$ is a singular generalized characteristic starting at $0$.

The following characterization is a direct consequence of Theorems \ref{one-to-one} and \ref{generaz}.

\begin{theorem}\label{c2}
  Let $u: [0,T[ \times \Omega \to \R$ be as in the previous theorem, and let $y_0 \in \Omega$. Then the following assertions are equivalent. \\
  {\bf (i)} The proximal subgradient $D_{pr}^- u_0 (y_0)$ is empty. \\
  {\bf (ii)} All the generalized characteristics $\gamma (\cdot )$, with $\gamma (0)=y_0$, are singular (possibly except at $t=0$).
  \end{theorem}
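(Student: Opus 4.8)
The plan is to prove Theorem \ref{c2} by assembling the two implications from the results already established, treating the two directions separately. The statement asserts the equivalence of (i) $D_{pr}^- u_0(y_0) = \emptyset$ and (ii) every generalized characteristic starting at $y_0$ is singular for all $t > 0$. Since both Theorem \ref{one-to-one} and Theorem \ref{generaz} are available, the work is essentially to show that these give the two implications cleanly, being careful about the quantifier ``all characteristics'' in (ii) as opposed to ``there exists a characteristic'' in Theorem \ref{generaz}, and about the uniqueness of the forward characteristic at positive times guaranteed by the structural assumption (H$^*$) via Theorem \ref{known}.

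First I would prove that (i) implies (ii). Assume $D_{pr}^- u_0(y_0) = \emptyset$, and let $\gamma$ be \emph{any} generalized characteristic with $\gamma(0) = y_0$, which exists by Remark \ref{initial}. Suppose for contradiction that $u$ is differentiable at $(\bar t, \gamma(\bar t))$ for some $\bar t > 0$. Theorem \ref{known} (using (H$^*$)) then forces $u$ to be differentiable at $(t,\gamma(t))$ for every $t \in \,]0,\bar t]$, so the restriction $\gamma|_{[0,\bar t]}$ is a classical characteristic. Theorem \ref{one-to-one}(ii) would then produce some $p_0 \in D_{pr}^- u_0(y_0)$, contradicting (i). This is exactly the argument of Theorem \ref{generaz}, except that there the characteristic was chosen to exist, whereas here the reasoning applies verbatim to any characteristic; I would simply invoke Theorem \ref{generaz}'s proof as applying to an arbitrary $\gamma$, so that \emph{all} characteristics from $y_0$ are singular.

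For the converse, I would prove the contrapositive: if (i) fails, i.e. $D_{pr}^- u_0(y_0) \neq \emptyset$, then (ii) fails, meaning there is at least one generalized characteristic from $y_0$ that is \emph{not} singular. Take any $p_0 \in D_{pr}^- u_0(y_0)$. By Theorem \ref{one-to-one}(i), the solution $(\xi,p)$ of the Hamiltonian system \eqref{ham-syst} with $\xi(0)=y_0$, $p(0)=p_0$ yields a classical characteristic $\xi$ on some interval $[0,\tau_0]$, along which $u$ is differentiable for $t \in \,]0,\tau_0]$. A classical characteristic is in particular a generalized characteristic (since along it $\dot\xi(t) = D_pH(t,\xi(t),D_xu(t,\xi(t)))$ and $D_x u = D_x^+ u$ at points of differentiability, so \eqref{ggc} holds), hence $\xi$ provides a generalized characteristic from $y_0$ that is regular for small positive $t$, violating (ii).

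The main obstacle is a bookkeeping one rather than a deep one: I must make sure the quantifiers in (ii) are handled correctly in both directions and that the characteristic exhibited in the converse direction is genuinely the \emph{same kind} of object (a generalized characteristic in the sense of \eqref{ggc}) as those quantified over in (ii). There is also a mild subtlety in that Theorem \ref{one-to-one}(i) only gives a classical characteristic on a short interval $[0,\tau_0]$, not on all of $[0,\sigma[$; but this is harmless, since to refute (ii) it suffices to exhibit regularity at a single positive time. Finally, the uniqueness of the forward characteristic from Theorem \ref{known} is what lets the ``there exists a singular characteristic'' of Theorem \ref{generaz} be upgraded to ``all characteristics are singular'' in the forward direction, so I would flag that (H$^*$) is used essentially here and not merely carried along.
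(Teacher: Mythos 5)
Your proof is correct and follows essentially the same route as the paper, which states Theorem \ref{c2} as a direct consequence of Theorems \ref{one-to-one} and \ref{generaz}: your forward direction is exactly the contradiction argument in the proof of Theorem \ref{generaz} (via Theorem \ref{known}), correctly observed to apply verbatim to an arbitrary characteristic from $y_0$, and your converse is the intended application of Theorem \ref{one-to-one}(i). Your flagged subtleties --- that a classical characteristic is a generalized one, and that the short interval $[0,\tau_0]$ suffices to refute (ii) --- are handled correctly and match the paper's implicit reasoning.
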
 
To conclude our analysis, we consider the case of a semiconcave initial value. In this case, we obtain a statement on the propagation of singularities from point where the initial datum is nondifferentiable, which extend the analogous propagation results for singularities of semiconcave functions starting from interior points, see \cite{AC,CY,Y}.

\begin{theorem}
  Let $u: [0,T[ \times \Omega \to \R$ be a solution of \eqref{eq:cp}, with $u_0$ locally semiconcave on $\Omega$ and the Hamiltonian as in {\rm (H$^*$)}. Let $y_0\in \Sigma (u_0)$. Then, the generalized characteristic starting at $y_0$ is singular. 
\end{theorem}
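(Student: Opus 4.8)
The plan is to reduce the statement to Theorem \ref{generaz}, which under assumption (H$^*$) already guarantees the existence of a singular generalized characteristic emanating from $(0,y_0)$ whenever $D^-_{pr} u_0(y_0) = \emptyset$. Thus the entire content of the theorem will follow once I show that, for a locally semiconcave datum $u_0$, every point of nondifferentiability $y_0 \in \Sigma(u_0)$ has empty proximal subdifferential. The remaining work is then purely to verify this implication and to interpret the conclusion of Theorem \ref{generaz} as a statement about \emph{the} characteristic through $y_0$. I note in passing that a semiconcave function is locally Lipschitz, so (U$_0$) is automatically in force and Theorem \ref{generaz} applies.

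To prove that $D^-_{pr} u_0(y_0) = \emptyset$, I would first recall the elementary dichotomy that whenever both $D^+ v(x_0)$ and $D^- v(x_0)$ are nonempty the function $v$ is differentiable at $x_0$, with both sets reducing to $\{Dv(x_0)\}$ (see \cite{CS}). Since $u_0$ is locally semiconcave, it has nonempty proximal superdifferential, hence nonempty superdifferential $D^+ u_0(y_0)$, at $y_0$. As $y_0 \in \Sigma(u_0)$, the function $u_0$ is not differentiable at $y_0$, so the dichotomy forces $D^- u_0(y_0) = \emptyset$; using the inclusion $D^-_{pr} u_0(y_0) \subset D^- u_0(y_0)$, I conclude $D^-_{pr} u_0(y_0) = \emptyset$. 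An equivalent route is through a local decomposition $u_0 = v_1 + v_2$ with $v_1$ concave and $v_2 \in C^2$: a proximal subgradient of $u_0$ at $y_0$ would yield, after subtracting the smooth part, a proximal subgradient of the concave function $v_1$ at a point where $v_1$ is not differentiable, which is impossible.

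With the emptiness of $D^-_{pr} u_0(y_0)$ in hand, Theorem \ref{generaz} produces a generalized characteristic $\gamma:[0,\sigma[ \to \Omega$ with $\gamma(0) = y_0$ at whose points $(t,\gamma(t))$ the solution $u$ is nondifferentiable for every $t \in ]0,\sigma[$, which is exactly the asserted singularity. I do not anticipate a genuine obstacle, since the result is essentially a corollary of the machinery already developed; the substance lies entirely in recognizing that semiconcavity converts nondifferentiability into emptiness of the proximal subdifferential. The only point meriting care is the transition from ``all characteristics from $y_0$ are singular'' (the literal output of Theorem \ref{generaz}, equivalently of the equivalence in Theorem \ref{c2}) to the singled-out characteristic named in the statement; this is justified because (H$^*$) is assumed, so Theorem \ref{known} provides forward uniqueness of the generalized characteristic at every positive time.
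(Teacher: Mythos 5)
Your main argument coincides with the paper's own proof. The core step is identical: semiconcavity gives $D^+u_0(y_0)\neq\emptyset$, nondifferentiability at $y_0$ then forces $D^-u_0(y_0)=\emptyset$ (both the Fr\'echet sub- and superdifferential are nonempty only at points of differentiability), hence $D^-_{pr}u_0(y_0)\subset D^-u_0(y_0)=\emptyset$, and Theorem \ref{generaz} (equivalently Theorem \ref{c2}) yields that every generalized characteristic from $(0,y_0)$ is singular at all positive times. This is the same dichotomy the paper invokes, phrased slightly differently, and your observation that semiconcavity implies local Lipschitz continuity, so that (U$_0$) is automatic, is correct.

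There is, however, a genuine gap in your final sentence, where you justify the definite article ``the generalized characteristic'' by appealing to Theorem \ref{known}. That theorem provides forward uniqueness only from points $(t_0,x_0)$ with $t_0>0$; it says nothing about uniqueness of characteristics emanating from the initial time $t_0=0$, and such uniqueness is in general \emph{false} for merely Lipschitz data under (H$^*$): in Example 1 (where $H=\tfrac12 p^2$, so $A\equiv 1$, $V\equiv 0$, and $u_0=|x|$) the whole fan $\xi_{p_0}(t)=p_0 t$, $p_0\in[-1,1]$, consists of generalized characteristics starting at the origin. Forward uniqueness at every positive time does not force two characteristics that agree only at $t=0$ to coincide thereafter. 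What actually rescues uniqueness here --- and what the paper uses --- is the semiconcavity of $u_0$ itself: it makes $u(t,\cdot)$ semiconcave with a locally bounded constant down to $t=0$, so the monotonicity-plus-Gronwall argument behind Theorem \ref{known} (Lemma 1 of \cite{ACNS}) runs on the closed-at-zero interval $[0,\sigma[$ and not just on $]0,\sigma[$. Note that this gap does not affect the singularity assertion itself, since your route through Theorem \ref{c2} already shows that \emph{all} characteristics starting at $y_0$ are singular; it only invalidates your justification of the claim that there is exactly one.
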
 
\begin{proof}
If the initial value is semiconcave, then the characteristic starting for $(0,y_0)$ is unique, by the same argument recalled in the proof of Theorem \ref{generaz}.
We further observe that, at any $y_0$ where $u_0$ is not differentiable, we have that $D_{pr}^- u_0 (y_0)=\emptyset$.
In fact, by the properties of semiconcave functions, if $D_{pr}^- u_0 (y_0)\not=\emptyset$ then $D^-u_0(y_0)\cap D^+u_0(y_0)=\{ Du_0(y_0)\}$ in contradiction with  $y_0\in\Sigma (u_0)$. We then conclude  by Theorem~\ref{c2}.
  \end{proof} 

{\bf Acknowledgments} The three authors have been partially supported by the Italian group GNAMPA of INdAM (Istituto Nazionale di Alta Matematica).  P.C. acknowledges the MIUR Excellence Department Project awarded to the Department of Mathematics, University of Rome Tor Vergata, CUP E83C18000100006. P.C. and C.S. have been partially supported by the Grant ``Mission: Sustainability'' 2016 (DOmultiage) of University of Rome Tor Vergata.

\end{document}